\documentclass[11pt,letterpaper]{article}
\pdfoutput=1
\pdfminorversion=4

\usepackage{fullpage}
\usepackage{graphicx,psfrag}
\usepackage[noend]{algorithmic}
\usepackage{xspace}
\usepackage{amssymb,amsmath,amscd,mathrsfs}
\usepackage{amsthm}
\usepackage{mdframed}
\usepackage{comment,color}
\usepackage{subcaption}
\usepackage[font=scriptsize]{caption}
\usepackage{url}
\usepackage{multirow}
\usepackage{enumerate}
\usepackage{tikz}
\usepackage{cuted}
\usetikzlibrary{shapes.gates.logic.US,trees,positioning,arrows}
\usepackage{mdframed}
\usepackage[title]{appendix}


\usepackage[bookmarks=true]{hyperref}

\surroundwithmdframed[innerleftmargin=3pt,innerrightmargin=3pt,innerbottommargin=3pt]{quote}

\usepackage{color}
\usepackage{amsmath}
\usepackage{amssymb}
\usepackage{graphicx}
\usepackage{comment,xspace}
\usepackage{fancybox}



\setcounter{secnumdepth}{3}

\newcommand{\real}{{\mathbb{R}}}
\newcommand{\reals}{\real}

\renewcommand{\natural}{{\mathbb{N}}}
\newcommand{\naturals}{\natural}


\newtheorem{theorem}{Theorem}[section]

\newtheorem{lemma}[theorem]{Lemma}
\newtheorem{remark}[theorem]{Remark}
\newtheorem{example}[theorem]{Example}
\newtheorem{definition}[theorem]{Definition}
\newtheorem{corollary}[theorem]{Corollary}

\newcommand{\probnoarg}{\mbox{$\mathbb{P}$}} 
\newcommand{\expectation}[1]{\mbox{$\mathbb{E}\left[#1\right]$}}

\newtoggle{EV}
\toggletrue{EV}

\newcommand{\revision}[1]{{\color{black}{#1}}}
\newcommand{\revisionII}[1]{{\color{black}{#1}}}

\patchcmd{\quote}{\rightmargin}{\leftmargin 0.1em \rightmargin}{}{}

\usepackage{epstopdf}


\newcommand{\emax}{\mathcal{E}_{\max}}

\newcommand{\fil}{\mathcal F}
\newcommand{\cs}{\mathcal Z}
\newcommand{\csd}{\mathcal U}

\newcommand{\risk}{\rho}

\newcommand{\upol}{\mathcal U^{\mathrm{poly}}}
\newcommand{\upolv}{\mathcal U^{\mathrm{poly}, V}}

\title{\LARGE \bf
A Framework for Time-Consistent, Risk-Sensitive \\Model Predictive Control: Theory and Algorithms \footnotetext{\llap{\textsuperscript{ }}This work was supported by ONR under the Science of Autonomy Program, Contract N00014-15-1-2673.}}

\author{ Sumeet Singh\thanks{Department of Aeronautics and Astronautics, Stanford University, CA 94305, USA. Emails: {\tt \{ssingh19, pavone\}@stanford.edu}.} \and
Yin-Lam Chow\thanks{Institute for Computational and Mathematical Engineering, Stanford University, Stanford, CA 94305, USA. Email: {\tt ychow@stanford.edu}.} \and 
Anirudha Majumdar\thanks{Department of Mechanical and Aerospace Engineering, Princeton University, NJ 08544, USA. Email: {\tt ani.majumdar@princeton.edu}.} 	\and
Marco Pavone\footnotemark[1]
}

\date{}

\begin{document}
\maketitle
\thispagestyle{plain}
\pagestyle{plain}

\maketitle
\begin{abstract}
In this paper we present a framework for risk-sensitive model predictive control (MPC) of linear systems affected by \revisionII{stochastic} multiplicative uncertainty. Our key innovation is to consider a time-consistent, dynamic risk evaluation \revision{of the cumulative cost} as the objective function to be minimized. This framework is axiomatically justified in terms of time-consistency of risk assessments, is amenable to dynamic optimization, and is unifying  in the sense that it captures a full range of risk preferences from risk-neutral \revisionII{(i.e., expectation)} to worst case. Within this framework, we propose and analyze an online risk-sensitive MPC algorithm that is provably stabilizing. Furthermore, by exploiting the dual representation of time-consistent, dynamic risk measures, we cast the computation of the MPC control law as a convex optimization problem amenable to real-time implementation. Simulation results are presented and discussed.
\end{abstract}

\section{Introduction}\label{sec:intro}

\revisionII{Safety-critical control demands the consideration of \revision{uncertain} events, and in particular of events with small probabilities that can nevertheless have catastrophic effects if realized. Accordingly, one of the \revision{current} main research thrusts \revision{in} Model Predictive Control (MPC) is to \revision{find techniques that can robustly address uncertainty~ \cite{Mayne2014,KouvaritakisCannon2015,Mesbah2016}.}} Techniques for handling uncertainty within the MPC framework fall \revision{broadly} into \revision{three categories: (1) min-max formulations, where the performance indices to be minimized are computed with respect to the worst possible disturbance realization\iftoggle{EV}{~\cite{KothareBalakrishnanEtAl1996,Loefberg2003,Mayne2014}}{}, (2) tube-based formulations, where classical (uncertainty-unaware) MPC is modified to use tightened constraints and augmented with a tracking ancillary controller to maintain the system within an invariant tube around the nominal MPC trajectory\iftoggle{EV}{~\cite{MayneSeronEtAl2005,RakovicKouvaritakisEtAl2012,CannonBuergerEtAl2011}}{}, and (3) stochastic formulations, where \emph{risk-neutral expected} values of performance indices (and possibly constraints) are considered\iftoggle{EV}{~\cite{PrimbsSung2009,BernardiniBemporad2012, CannonKouvaritakisEtAl2011, FlemingCannonEtAl2014, Mayne2016} (see also the recent reviews \cite{Mayne2014,Mesbah2016})}{~\cite{Mayne2014,KouvaritakisCannon2015,Mesbah2016}}. The main drawback of the min-max approach is that the control law may be too conservative, since the performance index is being optimized under the worst-case disturbance realizations (which may have an arbitrarily small probability of occurring). The tightened constraints in tube-based formulations induce similar conservatism upon the optimized cost function. On the other hand, stochastic formulations, where the assessment of future random outcomes is accomplished through the expectation operator, may be unsuitable in scenarios where one desires to account for {\em risk}, i.e., increased awareness of events of small probability and detrimental consequences.}

In general, there are three main challenges with incorporating risk-sensitivity into control and decision-making problems:

{\bf{Rationality and consistency:}} The behavior of a control system using a certain risk measure (i.e., a function that maps an uncertain cost to a real number) should be consistent over time. Intuitively, time-consistency stipulates that if a given sequence of costs incurred by the system, when compared to another sequence, has the same current cost and lower risk in the future, then it should be considered less risky at the current time (see Section \ref{subsection_time_cons} for a formal statement). Examples of ``irrational" behavior that can result from a time-inconsistent risk measure include: (1) a control system intentionally seeking to incur losses \cite{MannorTsitsiklis2011}, or (2) deeming states to be dangerous when in fact they are favorable under any realization of the underlying uncertainty \cite{Sarlette2009}, or (3) declaring a decision-making problem to be feasible (e.g., satisfying a certain risk threshold) when in fact it is infeasible under \emph{any} possible subsequent realization of the uncertainties \cite{RoordaSchumacherEtAl2005}. Remarkably, some of the most common strategies for incorporating risk aversion in decision-making (discussed below) display such inconsistencies \cite{Ruszczynski2010,Sarlette2009}.

{\bf{Computational tractability:}} A risk measure generally adds a nonlinear structure to the optimization problem one must solve in order to compute optimal actions. Hence, it is important to ensure the computational tractability of the optimization problem induced by the choice of \revision{a} risk measure, particularly in dynamic decision-making settings where the control system must plan and react to disturbances in real-time.

{\bf{Modeling flexibility:}} One would like to calibrate the risk measure to the control application at hand by: (1) exploring the full spectrum of risk assessments from worst-case to risk-neutral, and (2) ensuring that the risk measure can be applied to a rich set of uncertainty models (e.g., beyond Gaussian models).

Most popular methods in the literature for assessing risks do not satisfy these three requirements. The Markowitz mean-variance criterion~\cite{Markowitz1952}, which has dominated risk management for over $50$ years, leads to time-inconsistent assessments of risk in a multi-stage stochastic control \revision{setting} and also \revision{generally leads to} computationally intractable problems \cite{MannorTsitsiklis2011}.  Moreover, it is rather limited in terms of modeling flexibility \revision{since it relies only on the first two moments of the distribution} and there is only a single tuning parameter \revision{to} trade off between these two moments. Thus, the mean-variance criterion is not well-suited to applications where the disturbance model is non-Gaussian \revision{and has been shown to drastically underestimate the effect of extreme events characterized by severe losses~\cite{Szegoe2005}.}

A popular alternative to the mean-variance criterion is the entropic risk measure: $\rho(X) = \log\left(\mathbb{E}[e^{\theta X}]\right)/\theta$, \revision{with} $\theta\in(0,1)$. The entropic risk measure has been widely studied in the financial mathematics\iftoggle{EV}{~\cite{GerberPafum1998,FlemingSheu2000}}{~\cite{FlemingSheu2000}} and sequential decision making~\cite{HowardMatheson1972, MihatschNeuneier2002,BaeuerleRieder2013} literatures, and for modeling risk aversion in LQG control problems \cite{Whittle1981,GloverDoyle1987}. While the entropic risk is a more computationally tractable alternative to the mean-variance criterion and can also lead to time-consistent behavior~\cite{DetlefsenScandolo2005}, practical applications of the entropic risk measure have proven to be problematic. Notice that the first two terms of the Taylor series expansion of $\rho(X)$ form a weighted sum of mean and variance with regularizer $\theta$, i.e., $\rho(X)\approx\mathbb E(X)+ (\theta/2) \mathbb{E}(X-\mathbb E[X])^2$. Consequently, the primary concerns are similar to those associated with the mean-variance measure of risk, \revision{with the added complication that the exponential term may induce numerical conditioning problems~\cite{Whittle2002}.}
The entropic risk measure is a particular example of the general class of methods that model risk aversion \revision{by} using concave utility functions (convex disutility functions in the cost minimization setting). While the expected (dis)utility framework captures the intuitive notion of diminishing marginal utility, it suffers from the issue that even very little risk aversion over moderate costs leads to unrealistically high degrees of risk aversion over large costs \cite{Rabin2000,WangWebsterEtAl2009} (note that this is a limitation of \emph{any} concave utility function). 
\revision{Additionally, the linear treatment of stochasticity in expected (dis)utility theory is \revisionII{in general} too restrictive to be able to \revisionII{account for} more general statistics, particularly \revisionII{in the context of capturing extreme} events~\cite[Chapter 6]{Aven2009}. \revisionII{Specifically,} within the expected (dis)utility model, such events would be simply averaged over along with the low cost (or ``safe'') events. We refer the reader to~\cite[Chapter 6]{Aven2009} where the limitations of purely using utility functions for representing risk are discussed from a \revisionII{foundational perspective}}.


In order to overcome such challenges, in this paper we incorporate risk sensitivity in MPC by leveraging recent strides in the theory of \emph{dynamic} risk measures developed by the operations research community \cite{Ruszczynski2010}. This allows us to propose a framework that satisfies the requirements outlined above with respect to rationality and consistency, computational tractability, and modeling flexibility. Specifically, the key property of dynamic risk measures is that, by reassessing risk at multiple points in time, one can guarantee time-consistency of risk preferences and the agent's behavior \cite{Ruszczynski2010}. In particular, it is proven in \cite{Ruszczynski2010} that time-consistent risk measures can be represented as a \emph{composition} of {\em one-step} \emph{coherent risk measures}. Coherent risk measures~\cite{ArtznerDelbaenEtAl1999,RuszczynskiShapiro2006} have been thoroughly investigated and widely applied for static decision-making problems in operations research and finance. Coherent risk measures were originally conceived in  \cite{ArtznerDelbaenEtAl1999} from an \emph{axiomatization} of properties that any rational agent's risk preferences should satisfy (see Section \ref{sec:prelim} for a formal statement of these axioms). In addition to being axiomatically justified, coherent risk measures capture a wide spectrum of risk assessments from risk neutral to worst-case and thus provide a unifying approach to static risk assessments. Since time-consistent dynamic risks are composed of one-step coherent risks, they inherit the same modeling flexibility.

%

\revision{{\em Statement of Contributions}}: The contribution of this paper is threefold. First, we introduce a class of dynamic risk measures, referred to as Markov dynamic polytopic risk measures, that capture a full range of risk assessments and enjoy a geometrical structure that is particularly favorable from a computational standpoint. Second, we present and analyze a \emph{risk-sensitive} MPC algorithm that minimizes in a receding-horizon fashion a Markov dynamic polytopic risk measure, under the assumption that the system's model is linear and is affected by \revisionII{stochastic} multiplicative uncertainty. Finally, by exploring the \emph{geometric} structure of Markov dynamic polytopic risk measures, we present a convex programming formulation for risk-sensitive MPC that is amenable to a real-time implementation (for moderate horizon lengths). Our framework has three main advantages: (1) it is axiomatically justified, in the sense that risk, by construction, is assessed in a time-consistent fashion; (2) it is amenable to dynamic and convex optimization, primarily due to the compositional form of Markov dynamic polytopic risk measures and their  geometry; and (3) it is general, in that it captures a full range of risk assessments from risk-neutral to worst-case. In this respect, our formulation represents a \emph{unifying} approach for risk-sensitive MPC. 

\revision{Our approach is inspired by the work in~\cite{Ruszczynski2010}, whereby the authors consider a similar risk-sensitive objective function for controlled Markov processes within an infinite-horizon formulation. Differently from \cite{Ruszczynski2010}, in this paper we consider an MPC formulation, define and address notions of persistent feasibility and stability, and provide real-time algorithms for the solution of problems with continuous state- and control-spaces.}

A preliminary version of this paper was presented in~\cite{ChowPavone2014}. In this extended and revised version, we present the following key extensions: (1) the introduction of constraints on state and control variables, (2) a new offline/online MPC formulation for handling these constraints, and (3) additional numerical experimental results including \revision{(i) an illustration of the effects of varying levels of risk-sensitivity (thereby also providing a comparison with the standard risk-neutral MPC formulation), and (ii) a scalability study to assess the computational limitations of the proposed approach}.

\revision{\em {Organization}}: The rest of the paper is organized as follows. In Section \ref{sec:prelim} we provide a review of the theory of dynamic risk measures. In Section \ref{sec:sys} we discuss the stochastic model we address in this paper. In Section \ref{sec:risk} we introduce and discuss the notion of Markov dynamic polytopic risk measures. In Section \ref{sec:IHC} we state the infinite horizon optimal control problem we wish to address and in Section \ref{sec:stab} we derive conditions for risk-sensitive closed-loop stability.  In Section \ref{sec:MPC} we present the MPC adaptation of the infinite horizon problem and present various \revision{solution} approaches  in Section \ref{sec:alg}. Numerical experiments are presented and discussed in Section \ref{sec:example}. Finally,  in Section \ref{sec:conclusion} we draw some conclusions and discuss directions for future work. \iftoggle{EV}{}{\revision{Due to space limitations, all proofs \revisionII{of the theoretical results} are provided in \revision{an extended version of this paper}~\cite{SinghChowEtAl2018}.}}

\section{Review of Dynamic Risk Theory}\label{sec:prelim}
In this section, we briefly review the theory of coherent and dynamic risk measures, on which we will rely extensively in this paper. The material presented in this section summarizes several novel results in risk theory achieved in the past ten years. Our presentation strives to present this material in an intuitive fashion and with a notation tailored to control applications.
\subsection{Static, Coherent Measures of Risk}\label{subsec:static_risk}
Consider a probability space $(\Omega, \fil, \probnoarg)$, where $\Omega$ is the set of outcomes (sample space), $\fil$ is a $\sigma$-algebra over $\Omega$ representing the set of events we are interested in, and $\probnoarg$ is a probability measure over $\fil$. In this paper we will focus on disturbance models characterized by probability \emph{mass} functions (pmfs), hence we restrict our attention to finite probability spaces (i.e.,  $\Omega$ has a finite number of elements or, equivalently, $\fil$ is a finitely generated algebra). 
Denote with $\cs$ the space of random variables $Z:\Omega\mapsto (-\infty, \infty)$ defined over the probability space $(\Omega, \fil, \mathbb P)$. In this paper a random variable $Z\in \cs$ is interpreted as a cost, i.e., the smaller the realization of $Z$, the better. 
For $Z, W$, we denote by $Z\leq W$ the point-wise partial order, i.e., $Z(\omega)\leq W(\omega)$ for all $\omega\in \Omega$.

By a \emph{risk measure} we understand a function $\risk(Z)$ that maps an uncertain outcome $Z$ into the extended real line $\reals \cup\{ +\infty\}\cup \{-\infty\}$. In this paper we restrict our analysis to \emph{coherent risk measures}, defined as follows:

\begin{definition}[Coherent Risk Measures]\label{def:crm}
A coherent risk measure is a mapping $\risk:\cs \rightarrow \reals$, satisfying the following four axioms: for all $Z,W \in \cs$,
\begin{itemize}
\item[A1] Monotonicity:  $Z\leq W \Rightarrow \risk(Z)\leq\risk(W)$;
\item[A2] Translation invariance: $\forall a\in \reals$, $\risk(Z+a)=\rho(Z) + a$;
\item[A3] Positive homogeneity: $\forall \lambda\geq0$, $\risk(\lambda Z) = \lambda \risk(Z)$;
\item[A4] Subadditivity: $\risk(Z+W) \leq \risk(Z) + \risk(W)$.
\end{itemize}
\end{definition} 
These axioms were originally conceived in \cite{ArtznerDelbaenEtAl1999} and ensure the ``rationality" of one-step risk assessments (we refer the reader to \cite{ArtznerDelbaenEtAl1999} for a detailed motivation of these axioms). One of the main properties for coherent risk measures is a universal representation theorem for coherent risk measures, which in the context of \emph{finite} probability spaces takes the following form:

\begin{theorem}[Representation Theorem for Finite Probability Spaces \cite{ArtznerDelbaenEtAl1999}]\label{thrm:rep_finite}
Consider the probability space $\{\Omega, \fil, \probnoarg\}$ where $\Omega$ is finite with cardinality $L \in \naturals$, $\fil = 2^{\Omega}$, and $\probnoarg = (p(1), \ldots, p(L))$, with all probabilities positive.  Let $\mathcal B$ be the set of probability density functions:
$\mathcal B:=\Bigl \{ \zeta\in \reals^L: \sum_{j=1}^L \, p(j)\zeta(j)=1, \zeta\geq 0 \Bigr\}$.
The risk measure $\risk:\cs \rightarrow \reals$ is a coherent risk measure if and only if there exists a convex bounded and closed set $\csd \subset \mathcal B$ such that $
\risk(Z)=\max_{\zeta\in \csd} \mathbb E_{\zeta}[Z]$.
\end{theorem}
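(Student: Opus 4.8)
The plan is to prove both implications. The ``if'' direction is a direct verification of the four axioms; the ``only if'' direction rests on the support-function representation of sublinear functionals, together with a change of variables that converts abstract dual vectors into density functions in $\mathcal B$. For the ``if'' direction, suppose $\risk(Z)=\max_{\zeta\in\csd}\mathbb E_\zeta[Z]$ for some convex, closed, bounded $\csd\subset\mathcal B$, where $\mathbb E_\zeta[Z]=\sum_{j=1}^L p(j)\zeta(j)Z(j)$. I would check A1--A4 in turn. Monotonicity holds because $p(j)\zeta(j)\geq 0$, so $Z\leq W$ gives $\mathbb E_\zeta[Z]\leq\mathbb E_\zeta[W]$ for every $\zeta$, hence for the maximizer. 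Translation invariance uses the normalization $\sum_j p(j)\zeta(j)=1$ built into $\mathcal B$, giving $\mathbb E_\zeta[Z+a]=\mathbb E_\zeta[Z]+a$. Positive homogeneity is immediate since $\max$ commutes with scaling by $\lambda\geq 0$, and subadditivity follows from the linearity of $\mathbb E_\zeta$ together with $\max_\zeta(\mathbb E_\zeta[Z]+\mathbb E_\zeta[W])\leq\max_\zeta\mathbb E_\zeta[Z]+\max_\zeta\mathbb E_\zeta[W]$.

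For the ``only if'' direction, identify $\cs$ with $\reals^L$ via $Z\leftrightarrow(Z(1),\dots,Z(L))$. Axioms A3 and A4 state precisely that $\risk$ is positively homogeneous and subadditive, hence a finite-valued \emph{sublinear} (in particular convex) functional on $\reals^L$, which is automatically continuous in finite dimensions. The key analytic tool is the classical support-function representation: a finite sublinear $\risk$ equals the support function of its subdifferential at the origin,
\begin{equation*}
\risk(Z)=\max_{\mu\in\mathcal A}\langle\mu,Z\rangle, \qquad \mathcal A:=\{\mu\in\reals^L : \langle\mu,Z\rangle\leq\risk(Z)\ \forall Z\in\reals^L\},
\end{equation*}
which follows from the Hahn--Banach/separation (bipolar) theorem. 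The set $\mathcal A$ is convex and closed as an intersection of halfspaces, and bounded because $-\risk(-e_j)\leq\mu(j)\leq\risk(e_j)$ for each coordinate (with $e_j$ the $j$-th standard basis vector), so the maximum is attained.

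It then remains to show that A1 and A2 force $\mathcal A$ into the correct affine slice of the nonnegative orthant. Translation invariance applied to $Z=0$ yields $\risk(a\mathbf 1)=a$ for all $a\in\reals$, so any $\mu\in\mathcal A$ satisfies $\langle\mu,\mathbf 1\rangle\leq 1$ (taking $a>0$) and $\langle\mu,\mathbf 1\rangle\geq 1$ (taking $a<0$), whence $\sum_j\mu(j)=1$. Monotonicity gives $\mu\geq 0$: if some $\mu(k)<0$, then testing against $Z=-t\,e_k$ with $t\to+\infty$ yields $\langle\mu,Z\rangle=t|\mu(k)|\to+\infty$, while A1 forces $\risk(Z)\leq\risk(0)=0$, contradicting $\langle\mu,Z\rangle\leq\risk(Z)$. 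Finally, since every $p(j)>0$, the componentwise map $\zeta(j)=\mu(j)/p(j)$ is a linear bijection carrying $\mathcal A$ onto a set $\csd\subset\mathcal B$ (the constraint $\sum_j\mu(j)=1$ becomes $\sum_j p(j)\zeta(j)=1$, and $\mu\geq 0$ becomes $\zeta\geq 0$), preserving convexity, closedness, and boundedness, and satisfying $\langle\mu,Z\rangle=\mathbb E_\zeta[Z]$. This delivers the claimed representation.

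I expect the main obstacle to be the support-function representation step, i.e., justifying that a sublinear functional coincides with the support function of its subdifferential at $0$; this is where the separating-hyperplane theorem does the real work, and where one must verify that finiteness of $\risk$ on all of $\reals^L$ guarantees both nonemptiness and boundedness of $\mathcal A$. The subsequent identification of A1 and A2 with nonnegativity and normalization, and the rescaling into $\mathcal B$, are then routine.
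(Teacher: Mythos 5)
The paper does not prove this theorem at all: it is quoted directly from the cited reference \cite{ArtznerDelbaenEtAl1999} and used as a black box, so there is no in-paper proof to compare against. Judged on its own merits, your proof is correct and is the standard argument one finds in the risk-measure literature. The ``if'' direction verifies A1--A4 exactly as it should (with attainment of the maximum following from compactness of $\csd$ in $\reals^L$). The ``only if'' direction correctly isolates the real content: A3 and A4 make $\risk$ a finite sublinear functional on $\reals^L$, hence continuous, hence equal to the support function of the nonempty, convex, closed set $\mathcal A=\{\mu : \langle\mu,Z\rangle\leq\risk(Z)\ \forall Z\}$, with boundedness from the coordinate estimates $-\risk(-e_j)\leq\mu(j)\leq\risk(e_j)$; then A2 pins $\langle\mu,\mathbf 1\rangle=1$ and A1 pins $\mu\geq 0$, and the rescaling $\zeta(j)=\mu(j)/p(j)$ (legitimate because every $p(j)>0$) carries $\mathcal A$ onto a convex, closed, bounded subset of $\mathcal B$ while turning $\langle\mu,Z\rangle$ into $\mathbb E_\zeta[Z]$. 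The only cosmetic omission is that you implicitly use $\risk(0)=0$ (from positive homogeneity) both when deriving $\risk(a\mathbf 1)=a$ and in the monotonicity contradiction; stating it once would close that loop. Your self-diagnosis is also accurate: the Hahn--Banach extension step (a dominated linear functional on each line, extended below $\risk$) is precisely what delivers both nonemptiness of $\mathcal A$ and attainment of the supremum at every $Z$.
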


This result states that any coherent risk measure \revision{can be written as} an expectation with respect to a worst-case density function $\zeta$, chosen adversarially from a suitable \emph{set} of test density functions  (referred to as the \emph{risk envelope}). 


%


\subsection{Dynamic, Time-Consistent Measures of Risk}\label{subsection_time_cons}
This section provides a multi-period generalization of the concepts presented in Section \ref{subsec:static_risk} and follows closely the discussion in \cite{Ruszczynski2010}. Consider a probability space $(\Omega, \fil, \mathbb P)$, a filtration $\fil_0\subset \fil_1\subset \fil_2 \cdots \subset \fil_N \subset \fil$, and an adapted sequence of real-valued random variables $Z_k$, $k\in \{0, \ldots,N\}$. We assume that $\fil_0 = \{\Omega, \emptyset\}$, i.e., $Z_0$ is deterministic. The variables $Z_k$ can be interpreted as stage-wise costs. For each $k\in\{0, \ldots, N\}$, denote with $\cs_k$ the space of random variables defined over the probability space $(\Omega, \fil_k, \mathbb P)$; also, let $\cs_{k, N}:=\cs_k \times \cdots \times \cs_N$. Given sequences $Z = \{Z_k,\ldots, Z_N\}\in \cs_{k, N}$ and $W=\{W_k,\ldots, W_N\}\in \cs_{k, N}$, we interpret $Z\leq W$ component-wise, i.e., $Z_j\leq W_j$ for all $j\in \{k,\ldots, N\}$.

A dynamic risk measure is a sequence of mappings  $\risk_{k,N}:\cs_{k, N}\rightarrow\cs_k$, $k\in\{0, \ldots,N\}$, obeying the following monotonicity property:
$\risk_{k,N}(Z)\!\leq \!\risk_{k,N}(W)   \text{ for all } Z,W \!\in\!\cs_{k,N}$ such that $Z\leq W$. This monotonicity property (analogous to axiom A2 in Definition \ref{def:crm}) is a natural requirement for any meaningful dynamic risk measure. In this paper, we restrict our discussion to dynamic risk measures that ensure time-consistency~\cite{Ruszczynski2010}. Informally, this property states that if a certain `situation' is considered less risky than another situation in all states of the world at stage $k+1$, then it should also be considered less risky at stage $k$. \revision{Arguably, this is a desirable property to enforce when {\em designing} controllers for automatic control systems (in contrast to, e.g., {\em analyzing} human's behavior, which may or may not display such a property).}

To define the functional form of time-consistent dynamic risk measures, one must first generalize static coherent risk measures as follows:

\begin{definition}[Coherent One-step Conditional Risk Measures (\cite{Ruszczynski2010})]\label{def:coh}
A coherent one-step conditional risk measure is a mapping $\risk_k:\cs_{k+1}\rightarrow \cs_k$, $k\in\{0,\ldots,N-1\}$, with the following four properties: for all $W,W' \in \cs_{k+1}$  
\begin{itemize}
\item Monotonicity:  $W \leq W' \Rightarrow \risk_k(W)\leq\risk_k(W')$;
\item Translation invariance:  $\forall  Z \in \cs_k$, $\risk_k(Z+W)= Z + \risk_k(W)$;
\item Positive homogeneity: $\forall \lambda \geq 0$, $\risk_k(\lambda W) = \lambda \risk_k(W)$;
\item Subadditivity: $\risk_k(W+W') \leq \risk_k(W) + \risk_k(W')$.
\end{itemize}
\end{definition} 

%
We now state the main result of this section.
\begin{theorem}[Dynamic, Time-consistent Risk Measures (\cite{Ruszczynski2010})]\label{thrm:tcc}
Consider, for each $k\in\{0,\ldots,N\}$, the mappings $\risk_{k,N}:\cs_{k, N}\rightarrow\cs_k$ defined as
\begin{equation}\label{eq:tcrisk}
\begin{split}
\risk_{k,N} = Z_k &+ \risk_k(Z_{k+1} + \risk_{k+1}(Z_{k+2}+\ldots \\
					&+\risk_{N-2}(Z_{N-1}+\risk_{N-1}(Z_N))\ldots)),
\end{split}
\end{equation}
where the $\risk_k$'s are coherent one-step conditional risk measures. Then, the ensemble of such mappings is a dynamic, time-consistent  risk measure.
\end{theorem}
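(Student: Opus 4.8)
The plan is to verify directly the two defining requirements: that the ensemble $\{\risk_{k,N}\}$ is a \emph{dynamic} risk measure (i.e., satisfies the monotonicity property) and that it is \emph{time-consistent}. The crucial observation, read off immediately from~\eqref{eq:tcrisk}, is that the family satisfies the one-step recursion
\begin{equation*}
\risk_{k,N}(Z_k,\ldots,Z_N) = Z_k + \risk_k\bigl(\risk_{k+1,N}(Z_{k+1},\ldots,Z_N)\bigr),
\end{equation*}
with the convention $\risk_{N,N}(Z_N)=Z_N$. This is just the statement that the quantity nested inside the outermost $\risk_k$ in~\eqref{eq:tcrisk} is precisely the composition defining $\risk_{k+1,N}$. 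Everything else follows by backward induction on $k$ using only the four properties of the one-step maps $\risk_k$ from Definition~\ref{def:coh}; note in particular that $\risk_{k,N}$ indeed lands in $\cs_k$, since $Z_k\in\cs_k$ and $\risk_k$ maps $\cs_{k+1}$ into $\cs_k$.

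First I would establish monotonicity. Suppose $Z\leq W$ componentwise, i.e.\ $Z_j\leq W_j$ for all $j\in\{k,\ldots,N\}$. The base case $k=N$ is immediate from $\risk_{N,N}(Z_N)=Z_N$. For the inductive step, assume $\risk_{k+1,N}(Z_{k+1},\ldots,Z_N)\leq\risk_{k+1,N}(W_{k+1},\ldots,W_N)$. Applying monotonicity of $\risk_k$ gives $\risk_k\bigl(\risk_{k+1,N}(Z_{k+1},\ldots,Z_N)\bigr)\leq\risk_k\bigl(\risk_{k+1,N}(W_{k+1},\ldots,W_N)\bigr)$, and adding the stage costs $Z_k\leq W_k$ yields $\risk_{k,N}(Z)\leq\risk_{k,N}(W)$ via the recursion above. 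This proves $\{\risk_{k,N}\}$ is a dynamic risk measure.

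For time-consistency I would adopt the formal definition of~\cite{Ruszczynski2010}: for any $0\leq k< l\leq N$ and any $Z,W\in\cs_{k,N}$ with $Z_\tau=W_\tau$ for $\tau=k,\ldots,l-1$ and $\risk_{l,N}(Z_l,\ldots,Z_N)\leq\risk_{l,N}(W_l,\ldots,W_N)$, one must show $\risk_{k,N}(Z)\leq\risk_{k,N}(W)$. The key intermediate step is a \emph{nesting} identity obtained by iterating the one-step recursion: for $k\leq l\leq N$,
\begin{equation*}
\risk_{k,N}(Z_k,\ldots,Z_N) = \risk_{k,l}\bigl(Z_k,\ldots,Z_{l-1},\,\risk_{l,N}(Z_l,\ldots,Z_N)\bigr),
\end{equation*}
where the last argument of $\risk_{k,l}$ (its index-$l$ slot, lying in $\cs_l$ as required) is the tail risk $\risk_{l,N}(Z_l,\ldots,Z_N)$. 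Given this identity, time-consistency is immediate: the first $l-k$ arguments of $\risk_{k,l}$ coincide for $Z$ and $W$, and by monotonicity of $\risk_{k,l}$ (itself a composition of coherent one-step maps, hence monotone by the previous paragraph) the ordering of the tail risks propagates to $\risk_{k,N}$.

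The main obstacle is really just the bookkeeping in establishing the nesting identity with the correct indexing and measurability: one must check that $\risk_{l,N}(Z_l,\ldots,Z_N)\in\cs_l$ so that it is an admissible index-$l$ argument for $\risk_{k,l}$, and that translation invariance (used to pull the $\cs_k$-measurable stage cost $Z_k$ out of $\risk_k$) together with monotonicity are the only one-step axioms needed. The positive homogeneity and subadditivity axioms, by contrast, play no role in this particular result—they are what make each $\risk_{k,N}$ \emph{coherent}, but the dynamic and time-consistency properties require only monotonicity and translation invariance.
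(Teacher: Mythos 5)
Your proposal is correct, but there is nothing in the paper to compare it against: Theorem \ref{thrm:tcc} appears in the review Section \ref{sec:prelim} as a result imported from \cite{Ruszczynski2010} and is given no proof in this paper, so the only benchmark is Ruszczy\'nski's original argument. Your route --- the one-step recursion $\risk_{k,N}(Z_k,\ldots,Z_N)=Z_k+\risk_k\bigl(\risk_{k+1,N}(Z_{k+1},\ldots,Z_N)\bigr)$, monotonicity by backward induction, and time-consistency via the nesting identity $\risk_{k,N}(Z_k,\ldots,Z_N)=\risk_{k,l}\bigl(Z_k,\ldots,Z_{l-1},\risk_{l,N}(Z_l,\ldots,Z_N)\bigr)$ --- is precisely the standard proof of this (sufficiency) direction and matches the treatment in \cite{Ruszczynski2010}; the deep content of Ruszczy\'nski's Theorem 1, as the paper itself remarks immediately after the statement, is the \emph{converse} (necessity of the nested form for time-consistency), which is not part of the claim here. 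One refinement to your closing paragraph: translation invariance is not in fact used anywhere in your argument. In \eqref{eq:tcrisk} the stage cost $Z_k$ sits \emph{outside} $\risk_k$ by construction, so both the one-step recursion and the nesting identity are literal substitutions requiring no axioms at all, and monotonicity of the one-step maps is the only property your induction invokes. Translation invariance becomes relevant only when one wants to rewrite the nested sum-of-stage-costs form as a pure composition applied to the cumulative cost, as the paper does in equation \eqref{eq:mar_pol_inv} and in the footnote of Section \ref{sec:ex_1} --- but that identity is not needed for Theorem \ref{thrm:tcc}. Your observation that positive homogeneity and subadditivity play no role in this result is correct.
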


Remarkably, Theorem 1 in \cite{Ruszczynski2010} shows (under weak assumptions) that the ``multi-stage composition" in equation \eqref{eq:tcrisk} is indeed \emph{necessary for time-consistency}. Accordingly, in the remainder of this paper, we will focus on the \emph{dynamic, time-consistent risk measures} characterized in Theorem \ref{thrm:tcc}.

\section{Model Description}\label{sec:sys}
Consider the discrete time system:
\begin{equation}
x_{k+1}=A(w_k)x_k+B(w_k)u_k,\label{eqn_sys}
\end{equation}
where $k\in \naturals$ is the time index, $x_k\in\reals^{N_x}$ is the state, $u_k\in\reals^{N_u}$ is the (unconstrained) control input, and $w_k\in\mathcal{W}$ is the process disturbance. We assume that the initial condition $x_0$ is deterministic and that $\mathcal W$ is a finite set of cardinality $L$, i.e., $\mathcal W = \{w^{[1]}, \ldots, w^{[L]}\}$. Accordingly, denote $A_{j}:=A(w^{[j]})$ and $B_{j} := B(w^{[j]})$, $j\in \{1, \ldots, L\}$.

For each stage $k$  and state-control pair $(x_k, u_k)$, the process disturbance $w_k$ is drawn from set $\mathcal W$ according to the pmf $p=[p(1),\, p(2),\ldots,\, p(L)]^\top $,
where $p(j)=\mathbb{P}(w_k=w^{[j]})$, $j\in\{1,\ldots,L\}$. Without loss of generality, we assume that $p(j)>0$ for all $j$. Note that the pmf for the process disturbance is time-invariant, and that the process disturbance is \emph{independent} of the process history and of the state-control pair $(x_k, u_k)$. Under these assumptions, the stochastic process $\{x_k\}$ is clearly a Markov process. 

\revision{
\begin{remark}\label{rem:TV_p}
The results presented in this paper can be immediately extended to the case where the process disturbance pmf is time-varying (for example, it is driven by a separate stationary Markov process, as in the popular Markov Jump model~\cite{SouzaC.2006}) by defining an augmented state such as $(x_k,w_{k-1})$. We omit this generalization  in the interest of brevity and clarity.
\end{remark}
}

\section{Markov Polytopic Risk Measures}\label{sec:risk}

In this section we \emph{refine} the notion of dynamic, time-consistent risk measures (as defined in Theorem \ref{thrm:tcc}) in two ways: (1) we add a polytopic structure to the dual representation of coherent risk measures, and (2) we add a Markovian structure. This will lead to the definition of Markov dynamic polytopic risk measures, which enjoy favorable computational properties and, at the same time,  maintain most of the generality of dynamic, time-consistent risk measures.

\subsection{Polytopic Risk Measures}
According to the discussion in Section \ref{sec:sys}, the probability space for the process disturbance has a finite number of elements. \revision{Thus}, one has $\mathbb E_{\zeta}[Z]  =\sum_{j=1}^L\, Z(j) p(j) \zeta(j)$. In our framework (inspired by \cite{EichhornRoemisch2005}), we consider coherent risk measures where the risk envelope $\csd$ is a \emph{polytope}, i.e., there exist matrices $S^I$, $S^E$ and vectors $T^I$, $T^E$ of appropriate dimensions such that 
\begin{equation*}\label{polytope_set_dual}
\upol=\left\{\zeta \in \mathcal B  \mid S^I \, \zeta \leq T^I,\,\, S^E \zeta= T^E \right\}.
\end{equation*}
We will refer to coherent risk measures representable with a polytopic risk envelope as \emph{polytopic risk measures}. Consider the bijective map $q(j):=p(j) \zeta(j)$ (recall that, in our model, $p(j)>0$). Then, by applying such a map, one can easily rewrite a polytopic  risk measure as 
\[
\risk(Z)=\max_{q\in \upol} \mathbb E_{q}[Z],
\]
where $q$ is a \emph{pmf} belonging to a polytopic subset of the standard simplex, i.e.:
\begin{equation}\label{eq:rep_fin}
\upol= \Bigl \{ q\in \Delta^L \mid  S^I  q \leq T^I,\,\, S^E q= T^E  \Bigr  \},
\end{equation}
where $\Delta^L:=\bigl \{ q\in \reals^L: \sum_{j=1}^L \, q(j)=1, q \geq 0 \bigr\}$. Accordingly, one has $E_{q}[Z] = \sum_{j=1}^L \, Z(j) q(j)$ (note that, with a slight abuse of notation, we are using the same symbols as before for $\upol$, $S^I$, and $S^E$). \revision{We will refer to the set of vertices of $\upol$ as $\upolv$.}

The class of polytopic risk measures is \revision{large}; common examples include the expected value (the polytope reduces to the singleton pmf $\{p\}$) and the Conditional Value-at-Risk ($\text{CVaR}_{\alpha}$), defined as:
\begin{equation}
\text{CVaR}_{\alpha}(Z):=\inf_{y\in\reals}\left[y+\frac{1}{\alpha}\expectation{(Z-y)^+}\right], \label{CVaR}
\end{equation}
where $\alpha\in(0,1]$. \revision{Its} polytopic risk envelope is given by:
\[
\upol =\Bigl \{q\in \Delta^L  \mid 0\leq q(j) \leq \frac{p(j)}{\alpha},  j\in \{1,\ldots, L\}  \Bigr\}.
\]
Additional examples include semi-deviation measures, comonotonic risk measures
, spectral risk
, and optimized certainty equivalent
; see~\cite{MajumdarPavone2017} for further examples. The key point is that polytopic risk measures  \emph{cover a full gamut of risk assessments}, ranging from risk-neutral to worst case.

\subsection{Markov Dynamic Polytopic Risk Metrics}

Note  that in the definition of dynamic, time-consistent risk measures, since at stage $k$ the value of $\risk_k$ is $\fil_k$-measurable, the evaluation of risk can depend on the \emph{whole} past, see \cite[Section IV]{Ruszczynski2010}. For example, \revision{the level $\alpha$ in the definition of the $\mathrm{CVaR}_{\alpha}$ risk measure can be an $\fil_k$-measurable random variable (see \cite[Example 3]{Ruszczynski2010})}. This generality, which appears of little practical value in many cases, leads to optimization problems that are intractable. This motivates us to add a \emph{Markovian structure} to dynamic, time-consistent risk measures (similarly as in \cite{Ruszczynski2010}). In particular, we consider dynamic risk measures $\rho_{k,N}$ as defined in eq.~\eqref{eq:tcrisk} with coherent one-step conditional risk measures $\rho_k(\cdot)$ of the form:
\begin{equation}
\risk_k(Z(x_{k+1}))=\max_{q\in\upol_{k}(x_k,p)}\mathbb{E}_{q}[Z(x_{k+1})]
\label{markov_crm}
\end{equation}
where $ \upol_{k}(x_k, p)=  \{q\in \Delta^L\mid   S^I_{k}(x_k, p)q\leq T^I_{k}(x_k, p), S^E_{k}(x_k, p)q= T^E_{k}(x_k, p)\}$ is the polytopic risk envelope, dependent only on the state $x_k$. We term such dynamic, time-consistent risk measures as \emph{Markov Dynamic Polytopic Risk Measures}.

\revision{Consistent with the stationarity assumption on the process disturbance pmf $p$, we will further assume that the polytopic risk envelopes $\upol_k$ are independent of time $k$ and state $x_k$, i.e. $\upol_k(x_k, p) = \upol(p)$, for all $ k$. We stress that our results readily generalize to the more general case as per Remark~\ref{rem:TV_p}.} 

%

\section{Problem Formulation}\label{sec:IHC}

In light of Sections \ref{sec:sys} and \ref{sec:risk}, we are now in a position to state the risk-sensitive optimization problem we wish to solve in this paper. We \revision{start by introducing} a notion of stability tailored to our risk-sensitive context.
\begin{definition}[Uniform Global Risk-Sensitive Exponential Stabilty]\label{stoch_stab_defn_exp}
System \eqref{eqn_sys} is said to be Uniformly Globally Risk-Sensitive Exponentially Stable (UGRSES) if there exist constants $c\geq0$ and $\lambda\in[0,1)$ such that for all initial conditions $x_0\in\reals^{N_x}$,
\begin{equation}
\risk_{0,k}(0,\ldots,0,x_k^\top x_k)\leq c\, \lambda^k \, x_0^\top x_0,\quad \emph{for all } k\in\naturals,\label{stab_ineq}
\end{equation}
where $\{\risk_{0,k}\}$ is a Markov dynamic polytopic risk measure. If condition \eqref{stab_ineq} only holds for initial conditions within some bounded neighborhood $\Omega$ of the origin, the system is said to be Uniformly Locally Risk-Sensitive Exponentially Stable (ULRSES) with domain $\Omega$.
\end{definition}
Note that, in general, UGRSES is a \emph{more restrictive} stability condition than mean-square stability\iftoggle{EV}{, as illustrated by the following example:
\begin{example}[Mean-Square Stability versus Risk-Sensitive Stability]
System \eqref{eqn_sys} is said to be Uniformly Globally Mean-Square Exponentially Stable (UGMSES) if there exist constants $c\geq0$ and $\lambda\in[0,1)$ such that for all initial conditions $x_0\in\reals^{N_x}$,
\begin{equation*}
\expectation{x_k^\top x_k} \leq c\, \lambda^k \, x_0^\top x_0,\quad \emph{for all } k\in\naturals,
\end{equation*}
see \cite[Definition 1]{RyashkoSchurz1996} and \cite[Definition 1]{BernardiniBemporad2012}. Consider the discrete time system
\begin{equation}\label{eq:sys_ex_mses}
x_{k+1}=\begin{cases}
\sqrt{0.5} \, x_k &\text{with probability } 0.2,\\
\sqrt{1.1} \, x_k  &\text{with probability } 0.8.\\
\end{cases}
\end{equation}
A sufficient condition for system \eqref{eq:sys_ex_mses} to be UGMSES is that there exist positive definite matrices $P = P^\top \succ 0$ and $L=L^\top \succ 0$ such that 
\[
\expectation{x_{k+1}^\top   P x_{k+1}} - x_k^\top  Px_k \leq -x_k^\top  L x_k,
\]
for all $k \in \naturals$, see \cite[Lemma 1]{BernardiniBemporad2012}. One can easily check that with $P=100$ and $L=1$ the above inequality is satisfied, and, hence system \eqref{eq:sys_ex_mses} is UGMSES.

Assuming risk is assessed according to the Markov dynamic polytopic risk measure $\risk_{0,k} = CVaR_{0.5}\circ\ldots\circ CVaR_{0.5}$, we next show that system \eqref{eq:sys_ex_mses} is \emph{not} UGRSES. In fact, using the dual representation of CVaR, one can write
\[ \small
\text{CVaR}_{0.5}(Z(x_{k+1}))=\max_{q\in\upol} \mathbb E_q[Z(x_{k+1})], 
\]
where $\upol =\left\{q\in \Delta^2  \mid 0\leq q_1 \leq 0.4,\,\, 0\leq q_2 \leq 1.6 \right\}$. Consider the pmf $\overline q=[0.1/1.1, \, 1/1.1]^\top $. Since $\overline  q \in \upol$, one has
\[
\text{CVaR}_{0.5}(x_{k+1}^2)\geq 0.5\, x_k^2\, \frac{0.1}{1.1} +  1.1\, x_k^2\, \frac{1}{1.1} = 1.0455 \, x_k^2. 
\]
By repeating this argument, one can then show that 
 \[
\risk_{0,k}(x_{k+1}^2) =  \text{CVaR}_{0.5}\circ\ldots\circ\text{CVaR}_{0.5}(x_{k+1}^2)\geq a^{k+1} \, x_0^\top x_0,
\]
where $a = 1.0455$. Hence, one cannot find constants $c$ and $\lambda$ that satisfy equation \eqref{stab_ineq}. Consequently, system \eqref{eq:sys_ex_mses} is UGMSES but \emph{not} UGRSES.\end{example}}{; in~\cite{SinghChowEtAl2018}, we provide an example to illustrate the difference between these two notions of stability.
}

Consider the MDP described in Section  \ref{sec:sys} and let $\Pi$ be the set of all stationary feedback control policies, i.e., $\Pi := \Bigl \{ \pi: \reals^{N_x} \rightarrow \reals^{N_u}\}$. Consider the quadratic cost function $C:\reals^{N_x}\times \reals^{N_u}\rightarrow \reals_{\geq 0}$ defined as 
$C(x,u):=\|x\|_Q^2+\|u\|_R^2$, where $Q=Q^\top \succ 0$ and $R=R^\top \succ 0$ are given state and control penalties, and $\|x\|_A^2$ defines the weighted norm, i.e., $x^{T}Ax$. Define the multi-stage cost function:
\[
J_{0,k}(x_0,\pi):=\risk_{0,k}\Bigl(C(x_0,\pi(x_0)),\ldots,C(x_{k},\pi(x_{k})) \Bigr),
\]
and the compact, convex constraint sets:
\[
	\begin{split}
	\mathbb{X} &:= \{ x \in \reals^{N_x} : \|T_x x \|_2 \leq x_{\max}\}, \\ 
	\mathbb{U} &:= \{ u \in \reals^{N_u} : \|T_u u \|_2 \leq u_{\max} \}.
	\end{split}
\]
The problem we wish to address is as follows.

\begin{quote} {\bf Optimization Problem $\mathcal{OPT}$} --- Given an initial state $x_0\in \reals^{N_x}$, solve
\begin{align*}
\inf_{\pi\in\Pi} \quad &\limsup_{k\rightarrow\infty}J_{0,k}(x_0,\pi) \\
\text{s.t.} \quad & x_{k+1}=A(w_k)x_{k}+B(w_k)\pi(x_k)\\
\quad &x_k \in \mathbb{X}, \pi(x_k) \in \mathbb{U} \ \forall k\\
\quad &\text{System is UGRSES}.
\end{align*}
\end{quote}

We denote the optimal cost function as $J^{\ast}_{0,\infty}(x_0)$. Note that the risk measure in the definition of  UGRSES is assumed to be identical to the risk measure used to evaluate the  cost of a policy.  Also, by time-invariance of the conditional risk measures, one can write
\begin{equation}
\risk_{0,k}\Bigl(C(x_0,\pi(x_0)),\ldots,C(x_{k},\pi(x_{k})) \Bigr) =C(x_0, \pi(x_0)) + \risk(C(x_1, \pi(x_1)) +\ldots+\risk(C(x_k, \pi(x_k)))\ldots), \label{eq:mar_pol_inv}
\end{equation}
where $\risk(\cdot)$ is a  given Markov polytopic risk measure that models the \revision{``degree" of risk sensitivity}.  This paper addresses problem $\mathcal{OPT}$ along three main dimensions: (1) find sufficient conditions for \emph{risk-sensitive} stability (i.e., for UGRSES); (2) design a convex MPC algorithm to efficiently compute a suboptimal state-feedback control policy; and (3) \revision {assess algorithm performance via numerical experiments}.

\section{Risk-Sensitive Stability}\label{sec:stab}

In this section we provide a sufficient condition for system \eqref{eqn_sys} to be UGRSES, under the assumptions of Section \ref{sec:IHC}. This condition relies on Lyapunov techniques and \revision{\emph{extends} to the risk-sensitive setting the condition provided} in \cite{BernardiniBemporad2012} (specifically, Lemma \ref{lyap_stab} reduces to Lemma 1 in \cite{BernardiniBemporad2012} when the risk measure is simply an expectation). 
\begin{lemma}[Sufficient Condition for UGRSES]\label{lyap_stab}
Consider a policy $\pi \in \Pi$ and the corresponding closed-loop dynamics for system \eqref{eqn_sys}, denoted by $x_{k+1}=f(x_k,w_k)$. The closed-loop system is UGRSES if there exists a function $V(x): \reals^{N_x}\rightarrow\reals$ and scalars $b_1,b_2,b_3>0$, such that for all $x\in\reals^{N_x}$,
\begin{equation}
\begin{split}
&b_1\, \|x\|^2\leq V(x)\leq b_2\|x\|^2, \,\, \text{and}\\
&\risk(V(f(x, w)))-V(x)\leq -b_3\|x\|^2.\label{eqn_RSES}
\end{split}
\end{equation}
\revision{We refer to the} function $V(x)$ as a \emph{risk-sensitive Lyapunov function}.
\end{lemma}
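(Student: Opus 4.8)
The plan is to distill the two hypotheses in \eqref{eqn_RSES} into a single one-step contraction for $V$ along the closed-loop dynamics, and then to propagate this contraction through the nested composition that defines $\risk_{0,k}$. First I would combine the two inequalities: since $V(x)\le b_2\|x\|^2$ gives $\|x\|^2 \ge V(x)/b_2$, the second inequality in \eqref{eqn_RSES} yields $\risk(V(f(x,w))) \le V(x) - b_3\|x\|^2 \le (1 - b_3/b_2)\,V(x) =: \eta\,V(x)$ for every $x$. A quick check shows $\eta \in [0,1)$: clearly $\eta<1$ since $b_2,b_3>0$, while $\eta \ge 0$ follows because $V\ge b_1\|\cdot\|^2\ge 0$ together with monotonicity give $0 = \risk(0) \le \risk(V(f(x,w))) \le (b_2-b_3)\|x\|^2$ for all $x$, forcing $b_3\le b_2$. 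This places $\eta$ in the admissible range for $\lambda$ in Definition~\ref{stoch_stab_defn_exp}.

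Next I would rewrite the stability quantity using the compositional form \eqref{eq:tcrisk} together with translation invariance. Writing $V_j := V(x_j)$, and noting that in the stability expression $Z_0=\dots=Z_{k-1}=0$ and $Z_k=x_k^\top x_k$, the measure collapses to the $k$-fold nesting $\risk_{0,k}(0,\dots,0,x_k^\top x_k) = \risk_0(\risk_1(\cdots \risk_{k-1}(x_k^\top x_k)\cdots))$, where each $\risk_j$ is the Markov one-step risk measure \eqref{markov_crm} evaluated at the (random) state $x_j$. The contraction derived above then holds $\fil_j$-measurably: for each realization of $x_j$ one has $\risk_j(V_{j+1}) \le \eta\,V_j$.

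The core of the argument is a backward induction establishing $\risk_j(\risk_{j+1}(\cdots \risk_{k-1}(V_k)\cdots)) \le \eta^{\,k-j}\,V_j$ for $j=k,k-1,\dots,0$. The base case $j=k$ is trivial. For the inductive step, I would apply $\risk_j$ to the inductive hypothesis, using monotonicity to preserve the inequality, positive homogeneity to pull the scalar $\eta^{\,k-j-1}\ge 0$ outside $\risk_j$, and finally the one-step contraction $\risk_j(V_{j+1})\le\eta V_j$; this produces the factor $\eta^{\,k-j}$. Evaluating at $j=0$ gives $\risk_0(\cdots\risk_{k-1}(V_k)\cdots)\le \eta^k V_0$.

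To finish, I would sandwich with the quadratic bounds: since $x_k^\top x_k \le V_k/b_1$, monotonicity and positive homogeneity of the composition give $\risk_{0,k}(0,\dots,0,x_k^\top x_k) \le (1/b_1)\,\risk_0(\cdots\risk_{k-1}(V_k)\cdots) \le (1/b_1)\,\eta^k V_0 \le (b_2/b_1)\,\eta^k\,x_0^\top x_0$, using $V_0\le b_2\|x_0\|^2$. Setting $c:=b_2/b_1$ and $\lambda:=\eta$ recovers \eqref{stab_ineq}, proving UGRSES. The main obstacle I anticipate is the bookkeeping around the conditional, Markov structure: one must verify that the pointwise one-step inequality can be inserted $\fil_j$-measurably at each level of the nesting, and that monotonicity and positive homogeneity genuinely propagate through a \emph{composition} of conditional risk measures rather than through a single static one. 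Both follow from the stagewise axioms in Definition~\ref{def:coh}, but they are the steps that require care.
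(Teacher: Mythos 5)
Your proposal is correct and follows essentially the same route as the paper's proof: both establish $b_3\le b_2$ via monotonicity and nonnegativity of $V$, distill the hypotheses into the one-step contraction with factor $\lambda=1-b_3/b_2$, and propagate it through the nested composition $\risk_0\circ\cdots\circ\risk_{k-1}$ using monotonicity, positive homogeneity, and translation invariance, ending with the same constants $c=b_2/b_1$ and $\lambda=1-b_3/b_2$. The only difference is presentational: you fix the horizon and run a backward induction on the stage index, whereas the paper unrolls the horizon forward one outer layer at a time.
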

\iftoggle{EV}{
\begin{proof}
From the time-consistency, monotonicity, translational invariance, and positive homogeneity of Markov dynamic polytopic risk measures, condition \eqref{eqn_RSES} implies
\[
\begin{split}
 &\risk_{0,k+1}(0,\ldots,0,b_1\|x_{k+1}\|^2) \\
&\leq\risk_{0,k+1}(0,\ldots,0, V(x_{k+1})) \\
				&=\risk_{0,k}(0,\ldots,0,V(x_k)+\risk( V(x_{k+1})-V(x_k)))\\
 &\leq\risk_{0,k}(0,\ldots,0,V(x_k)-b_3\|x_k\|^2) \\
& \leq\risk_{0,k}(0,\ldots,0,(b_2-b_3)\|x_k\|^2).
 \end{split}
 \]
 Also, since $\risk_{0,k+1}$ is monotonic, one has $b_1\risk_{0,k+1}(0,\ldots,0,\|x_{k+1}\|^2)\geq 0$, which implies $b_2\geq b_3$ and in turn $(1-b_3/b_2)\in[0,1)$. 
Since $V(x_k)/b_2\leq \|x_k\|^2$, by using the previous inequalities one can write:
\begin{equation*} 
\begin{split}
\risk_{0,k+1}(0,\ldots,0, V(x_{k+1})) &\leq \risk_{0,k}(0,\ldots,0,V(x_k)-b_3\|x_k\|^2) \\
	&\leq \left(1\!-\!\frac{b_3}{b_2}\right)\risk_{0,k}\left(0,\ldots,0,V(x_k)\right).
\end{split}
\end{equation*}
Repeating this bounding process, one obtains:
\begin{equation*}
\begin{split}
&\risk_{0,k+1}(0,\ldots,0,V(x_{k+1})) \\
& \leq\left(1-\frac{b_3}{b_2}\right)^{k}\risk_{0,1}\left(V(x_1)\right) =\left(1-\frac{b_3}{b_2}\right)^{k} \risk \left(V(x_1)\right) \\
& \leq\left(1-\frac{b_3}{b_2}\right)^{k} \left(V(x_0) - b_3\|x_0\|^2 \right)\leq\, b_2\left(1-\frac{b_3}{b_2}\right)^{k+1} \, \|x_0\|^2.
\end{split}
\end{equation*}
Again, by monotonicity, the above result implies
\[
\risk_{0,k+1}(0,\ldots,0,x_{k+1}^\top x_{k+1})\leq\frac{b_2}{ b_1}\left(1-\frac{b_3}{b_2}\right)^{k+1}x_0^\top x_0.
\]
By setting $c=b_2/ b_1$ and $\lambda=(1-b_3/b_2)\in[0,1)$,  the claim is proven.
\end{proof}}{}
The closed-loop system is ULRSES with domain $\Omega$ if \eqref{eqn_RSES} holds within the bounded set $\Omega$.

\section{Model Predictive Control Problem}\label{sec:MPC}

This section describes a MPC strategy that approximates the solution to $\mathcal{OPT}$. \revision{We note that while an exact solution to $\mathcal{OPT}$ would lead to time-consistent risk assessments, MPC is not guaranteed to be time consistent over an infinite horizon realization, due to its {receding horizon nature}.} \revision{In this regard}, the MPC strategy  provides an efficiently implementable policy that approximately mimics the time-consistent nature of the optimal solution to $\mathcal{OPT}$. 

Our receding horizon framework \revision{consists of} two steps. First, offline, we search for the \emph{largest} ellipsoidal set $\emax$ and accompanying \emph{local} feedback control law $u(x) = Fx$ that renders $\emax$ control invariant and ensures satisfaction of \revision{the} state and control constraints. Additionally, within the offline step, we search for a terminal cost matrix $P$ (for the online MPC problem) to ensure that the closed-loop dynamics under the model predictive controller are risk-sensitive exponentially stable. The online MPC optimization then constitutes the second step of our framework. 

Consider first, the offline step. We parameterize $\emax$ as follows:
\begin{equation}
\emax (W):=\{ x\in \reals^{N_x} \mid \, x^{\top} W^{-1}x \leq 1\},
\label{term_set}
\end{equation}
where $W$ (and hence $W^{-1}$) is a positive definite matrix. The (offline) optimization problem to \revision{compute} $W$, $F$, and $P$ is presented below. 

\allowdisplaybreaks
\begin{quote} {\bf Optimization Problem $\mathcal{PE}$} --- Solve\vspace{-0.5cm}

{
\begin{align}
\hspace{-3mm} \max_{\substack{W = W^\top \succ 0  \\ P= P^\top \succ 0 \\ F}} \quad  &\text{logdet}(W)  \\
\text{s.t.} \quad & F^\top \frac{T_{u}^\top T_u}{u_{\max}^2}F-W^{-1}\preceq 0\label{control_LMI_1_stoch} \\
\quad &\begin{aligned}
&\sum_{j=1}^L q_{l}(j)\, (A_{j}+B_{j}F)^\top P(A_j+B_{j}F)  - P \\
&  +(F^\top RF+Q)\prec 0,\forall q_l\in\upolv \end{aligned} \label{term_ineq}\\
&\forall j\in\{1,\ldots,L\}:  \nonumber \\
& (A_{j}+B_{j}F)^\top \frac{T_{x}^\top T_x}{x_{\max}^2}(A_{j}+B_{j}F)-W^{-1}\preceq 0  \label{state_cons_LMI_1_stoch}\\
& (A_{j}+B_{j}F)^\top W^{-1}(A_j+B_{j}F)-W^{-1}\preceq 0.\label{state_term_cond_stoch} 
\end{align}
}%
\end{quote}
Note that inequality \eqref{term_ineq} (crucial to guaranteeing closed-loop risk-sensitive stability) is \emph{bi-linear} in the decision variables. In Section \ref{sec:alg}, we will derive an equivalent Linear Matrix Inequality (LMI) characterization of \eqref{term_ineq} in order to derive efficient solution algorithms. We first state the implications of problem $\mathcal{PE}$.

\begin{lemma}[Properties of $\mathcal{E}_{\max}$]\label{ctrl_inv}
Suppose problem $\mathcal{PE}$ is feasible and $x \in \mathbb{X} \cap \mathcal{E}_{\max}(W)$. Let $u(x) = Fx$. Then, the following statements are true:
\begin{enumerate}
	\item $\|T_{u}u\|_2\leq u_{\max}$, i.e., the control constraint is satisfied.
	\item $\|T_{x}\left(A(w)x + B(w)u\right)\|_2\leq x_{\max}$ surely, i.e., the state constraint is satisfied at the next step \revision{almost surely}.
	\item $A(w)x + B(w)u\in\mathcal{E}_{\max}(W)$ surely, i.e., the set $\mathcal{E}_{\max}(W)$ is robust control invariant under the control law $u(x)$. 
\end{enumerate}
Thus, $u(x) \in \mathbb{U}$ and $A(w)x + B(w)u \in \mathbb{X} \cap \mathcal{E}_{\max}(W)$ \revision{almost surely}.
\end{lemma}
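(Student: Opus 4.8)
The plan is to observe that all three claims follow from the single structural fact that membership $x \in \emax(W)$ is precisely the inequality $x^\top W^{-1} x \leq 1$, together with the observation that each constraint defining problem $\mathcal{PE}$ asserts a Loewner ordering $M \preceq W^{-1}$ for an appropriate symmetric matrix $M$ built out of $F$, $T_u$, $T_x$, or the closed-loop matrices $A_j + B_j F$. Once this template is in place, every claim reduces to the elementary chain $x^\top M x \leq x^\top W^{-1} x \leq 1$, valid for any $x \in \emax(W)$.

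First I would establish the control-constraint claim. Writing $u = Fx$, we have $\|T_u u\|_2^2 = x^\top F^\top T_u^\top T_u F x$. Constraint \eqref{control_LMI_1_stoch} states exactly that $F^\top (T_u^\top T_u / u_{\max}^2) F \preceq W^{-1}$, so evaluating the quadratic form at $x \in \emax(W)$ gives $\|T_u u\|_2^2 / u_{\max}^2 \leq x^\top W^{-1} x \leq 1$, i.e. $\|T_u u\|_2 \leq u_{\max}$, which is claim (1) and yields $u(x) \in \mathbb{U}$.

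Next I would treat the two ``surely'' claims, interpreting ``surely'' as ``for every disturbance realization $w^{[j]}$, $j \in \{1,\ldots,L\}$,'' which is legitimate because $\mathcal{W}$ is finite; this is exactly the quantification under which constraints \eqref{state_cons_LMI_1_stoch} and \eqref{state_term_cond_stoch} are stated. For the state constraint at the next step, the successor under realization $j$ is $x_+^{(j)} = (A_j + B_j F)x$, and constraint \eqref{state_cons_LMI_1_stoch} supplies $(A_j + B_j F)^\top (T_x^\top T_x / x_{\max}^2)(A_j + B_j F) \preceq W^{-1}$; the same quadratic-form bound gives $\|T_x x_+^{(j)}\|_2 \leq x_{\max}$, hence claim (2). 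For invariance, constraint \eqref{state_term_cond_stoch} supplies $(A_j + B_j F)^\top W^{-1}(A_j + B_j F) \preceq W^{-1}$, so $(x_+^{(j)})^\top W^{-1} x_+^{(j)} \leq x^\top W^{-1} x \leq 1$, meaning $x_+^{(j)} \in \emax(W)$, which is claim (3). Combining (2) and (3) over all $j \in \{1,\ldots,L\}$ gives the final assertion $A(w)x + B(w)u \in \mathbb{X} \cap \emax(W)$ for every realization of $w$, and hence almost surely.

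The proof carries no genuine obstacle, since the LMIs of $\mathcal{PE}$ are engineered so that each implication holds by construction; the only points requiring care are (i) maintaining the correct direction of the Loewner inequalities and (ii) noting that the current-step hypothesis $x \in \mathbb{X}$ is in fact not needed for any of the three successor properties, which rely solely on $x \in \emax(W)$ --- the intersection with $\mathbb{X}$ appears only because the MPC policy is applied to states already feasible for the state constraint. The stability constraint \eqref{term_ineq}, involving $P$ and the risk-envelope vertices $\upolv$, plays no role here; it is reserved for the separate risk-sensitive stability argument.
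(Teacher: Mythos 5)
Your proof is correct, and it is somewhat more direct than the paper's. The paper proves claims (1)--(2) by factoring through $W^{1/2}$: it observes $\|W^{-1/2}x\|_2 \leq 1$ on $\emax(W)$ (via Schur complement on \eqref{term_set}) and then bounds $\|T_u F x\|_2 = \|T_u F W^{1/2}(W^{-1/2}x)\|_2$ using submultiplicativity of the operator norm, showing that \eqref{control_LMI_1_stoch} is equivalent to the operator-norm bound $\|T_u F W^{1/2}\|_2 \leq u_{\max}$; claim (3) is handled separately via the S-procedure, where \eqref{state_term_cond_stoch} emerges as the $(1,1)$ block of the S-procedure certificate with multiplier $\lambda = 1$. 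You instead run a single uniform template for all three claims: each constraint of $\mathcal{PE}$ is a Loewner ordering $M \preceq W^{-1}$, and evaluating the quadratic form at $x \in \emax(W)$ gives $x^\top M x \leq x^\top W^{-1} x \leq 1$. This chaining argument is elementary, avoids both the norm factorization and the S-procedure, and is fully rigorous since $M \preceq W^{-1}$ means precisely $x^\top M x \leq x^\top W^{-1}x$ for all $x$. What your route does not recover (and does not need to) is the paper's side observation that $\lambda = 1$ yields the \emph{largest} feasibility set in the S-procedure, i.e., that \eqref{state_term_cond_stoch} is not an overly conservative sufficient condition for invariance --- a design remark rather than part of the lemma's claim. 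Your handling of ``surely'' as quantification over the finitely many realizations $w^{[j]}$ with positive probability, and your observation that the hypothesis $x \in \mathbb{X}$ is never used, are both correct.
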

\iftoggle{EV}{\begin{proof} See Appendix~\ref{app:mpc_aux}. \end{proof}}{}
Lemma \ref{ctrl_inv} establishes $\mathbb{X} \cap \emax(W)$ as a robust control invariant set under the feasible feedback control law $u(x) = Fx$. This result will be crucial to ascertain the persistent feasibility and closed-loop stability of the online optimization algorithm.

\iftoggle{EV}{
\begin{remark}
One can extend problem $\mathcal{PE}$ to the case with time-varying disturbance pmf by leveraging the state-space augmentation proposed in Remark~\ref{rem:TV_p}. Specifically, one would now require the existence of a \emph{state-dependent} feedback law for the terminal set $\mathcal{E}_{\max}(W)$. That is, instead of the terminal control law $u(x) = Fx$ for $x \in \mathcal{E}_{\max}(W)$, one now must search for a set of gain matrices $\{F_j\}_{j=1}^{L}$ where $u = F_j x $ is used when the augmented state is $(x, w^{[j]})$. Thus, this would simply induce an additional $L$ copies of the constraints in problem $\mathcal{PE}$.
\end{remark}
}{}

We are now ready to formalize the MPC problem. Suppose the feasible set of solutions in problem $\mathcal{PE}$ is non-empty and define $W=W^\ast$ and $P=P^\ast$, where $W^\ast,P^\ast$ are the maximizers for problem $\mathcal{PE}$. Given a prediction horizon $N \geq 1$, define the MPC cost function:
\begin{equation}\label{cost_MPC}
\begin{split}
J_k(x_{k|k},\ \pi_{k|k}, \ldots,\pi_{k+N-1|k},P) := \risk_{k,k+N}\big(&C(x_{k|k},\pi_{k|k}(x_{k|k})),\ldots, \\
&C(x_{k+N-1|k},\pi_{k+N-1|k}(x_{k+N-1|k})), \\
&C_P(x_{k+N|k})\big),
\end{split}
\end{equation}
where $x_{h|k}$ is the state at time $h$ predicted at stage $k$, $\pi_{h|k}:\mathbb{X} \rightarrow \mathbb{U}$ is the control \emph{policy} to be applied at time $h$ as determined at stage $k$, and $C_P(x) := x^T P x$ is the terminal cost function. Then, the \emph{online} MPC problem is formalized as:
\begin{quote} {\bf Optimization problem $\mathcal{MPC}$} --- Given current state $x_{k|k}\in \mathbb{X}$ and prediction horizon $N\geq 1$, solve \vspace{-0.5cm}

{
\begin{align}
\min_{ \substack{\pi_{k+h|k} \\ h \in [0,N-1]}  } \quad &J_k\left(x_{k|k},\pi_{k|k},\ldots,\pi_{k+N-1|k},P\right)  \\
\text{s.t.} \quad &  \begin{aligned} 
&x_{k+h+1|k}= A(w_{k+h})x_{k+h|k} +B(w_{k+h})\pi_{k+h|k}(x_{k+h|k})
\end{aligned}\\
\quad & \pi_{k+h|k}(x_{k+h|k}) \in \mathbb{U},\   x_{k+h+1|k} \in \mathbb{X}, \quad  h\in\{0,\ldots,N-1\} \label{mpc_sc_con}\\
\quad & x_{k+N|k}\in\mathcal E_{\max}(W)\,\,\text{surely}. \label{mpc_term_con}
\end{align}
}%
\end{quote}  
Note that a Markov policy is guaranteed to be optimal for problem $\mathcal{MPC}$ (see \cite[Theorem 2]{Ruszczynski2010}). The optimal cost function for problem $\mathcal{MPC}$ is denoted by $J^{*}_k(x_{k|k})$, and a minimizing policy is denoted by $\{\pi^\ast_{k|k}, \ldots, \pi^\ast_{k+N-1|k}\}$. For each state $x_k$, we set $x_{k|k} = x_k$ and the (time-invariant) model predictive control law is then defined as 
\begin{equation}\label{MPC_law_2} 
 \pi^{MPC}(x_k)=\pi^\ast_{k|k}(x_{k|k}).
\end{equation}

Note that problem $\mathcal{MPC}$ involves an optimization over \emph{time-varying closed-loop policies}, as opposed to the classical deterministic case where the optimization is over open-loop control inputs. We will show in Section \ref{sec:alg} how to solve problem $\mathcal{MPC}$ efficiently. We now address the persistent feasibility and stability properties for problem $\mathcal{MPC}$. 

\begin{theorem}[Persistent Feasibility]\label{feas_MPC}
Define $\mathcal{X}_N$ to be the set of initial states for which problem $\mathcal{MPC}$ is feasible.  
Assume $x_{k|k}\in\mathcal{X}_N$ and the control law is given by \eqref{MPC_law_2}. Then, it follows that $x_{k+1|k+1}\in\mathcal{X}_N$ surely. 
\end{theorem}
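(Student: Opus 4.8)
The plan is to use the standard shift-and-append argument for persistent feasibility, adapted to the robust, closed-loop setting where all constraints must hold surely. Since $x_{k|k}\in\mathcal{X}_N$, problem $\mathcal{MPC}$ admits an optimal policy sequence $\{\pi^\ast_{k|k},\ldots,\pi^\ast_{k+N-1|k}\}$ with associated predicted trajectory $x_{k|k},x_{k+1|k},\ldots,x_{k+N|k}$ satisfying the state/control constraints \eqref{mpc_sc_con} and the terminal constraint \eqref{mpc_term_con} for every disturbance realization. Applying the MPC law \eqref{MPC_law_2}, the realized successor is $x_{k+1|k+1}=A(w_k)x_{k|k}+B(w_k)\pi^\ast_{k|k}(x_{k|k})$, which coincides with the one-step prediction $x_{k+1|k}$ for the realized $w_k$. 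I would fix an arbitrary $w_k$ and exhibit a feasible policy at this $x_{k+1|k+1}$, so that the conclusion follows surely.

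First I would construct an explicit feasible candidate for problem $\mathcal{MPC}$ started at $x_{k+1|k+1}$: reuse the tail of the previous solution for the first $N-1$ stages, i.e. set $\tilde\pi_{k+1+h|k+1}:=\pi^\ast_{k+1+h|k}$ for $h\in\{0,\ldots,N-2\}$, and append the terminal feedback law $\tilde\pi_{k+N|k+1}(x):=Fx$ for the last stage. Because the initial state $x_{k+1|k+1}$ matches $x_{k+1|k}$ and the closed-loop policies and dynamics are identical, the predicted trajectory generated by the shifted policies coincides, surely and for every future disturbance realization, with the tail $x_{k+1|k},\ldots,x_{k+N|k}$ of the previous optimal trajectory. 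Hence the state and control constraints over the first $N-1$ stages are inherited directly from the feasibility of the previous problem; in particular the previous terminal constraint \eqref{mpc_term_con} now yields the intermediate membership $x_{k+N|k+1}=x_{k+N|k}\in\mathbb{X}\cap\emax(W)$ surely.

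It remains to verify the final appended stage, and this is where Lemma \ref{ctrl_inv} does all the work. Since $x_{k+N|k+1}\in\mathbb{X}\cap\emax(W)$ and the applied control is $u=Fx_{k+N|k+1}$, the three conclusions of Lemma \ref{ctrl_inv} deliver exactly what is needed: the control constraint $u\in\mathbb{U}$ holds, the successor $x_{k+N+1|k+1}$ satisfies $x_{k+N+1|k+1}\in\mathbb{X}$ surely, and robust control invariance gives $x_{k+N+1|k+1}\in\emax(W)$ surely, which is precisely the new terminal constraint. Thus the candidate policy is feasible and $x_{k+1|k+1}\in\mathcal{X}_N$; since $w_k$ was arbitrary, this holds surely.

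The main obstacle is not the algebra but correctly handling the robust, closed-loop nature of the problem: the decision variables are feedback \emph{policies} rather than open-loop inputs, and every constraint must hold for all disturbance realizations. The shift argument must therefore be phrased at the level of policies evaluated along matching state trajectories, and the trajectory-matching claim must be justified uniformly over all disturbance sequences. Once this bookkeeping is in place, robust control invariance of $\mathbb{X}\cap\emax(W)$ from Lemma \ref{ctrl_inv} closes the argument cleanly.
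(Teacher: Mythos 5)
Your proof is correct and follows essentially the same route as the paper: shift the optimal tail policy $\{\pi^\ast_{k+1|k},\ldots,\pi^\ast_{k+N-1|k}\}$, append the terminal feedback law $Fx$, and invoke Lemma \ref{ctrl_inv} to certify feasibility of the appended stage and the new terminal constraint. Your write-up is more explicit than the paper's (in particular the trajectory-matching argument over all disturbance realizations and the policy-level bookkeeping), but the underlying argument is identical.
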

\iftoggle{EV}{\begin{proof} See Appendix~\ref{app:mpc_aux}. \end{proof}}{}
\begin{theorem}[Stochastic Stability with MPC]\label{stoch_stab_MPC}
Suppose the initial state $x_{0}$ lies within $\mathcal{X}_N$. Then, under the model predictive control law given in \eqref{MPC_law_2}, the closed-loop system is ULRSES with domain $\mathcal{X}_N$. 
\end{theorem}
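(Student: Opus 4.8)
The plan is to exhibit the optimal MPC value function as a risk-sensitive Lyapunov function for the closed loop and then invoke the local version of Lemma \ref{lyap_stab}. First I would observe that, because the dynamics, stage cost $C$, one-step risk measure $\risk$, terminal penalty $P$, terminal set $\emax(W)$, constraint sets, and horizon $N$ are all time-invariant, problem $\mathcal{MPC}$ depends on $k$ only through its initial state; hence the optimal cost $J^*_k(\cdot) \equiv J^*(\cdot)$ is stationary, and I would set $V(x) := J^*(x)$.

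The first ingredient is a terminal inequality extracted from constraint \eqref{term_ineq}. Writing $f_j := (A_j + B_j F)x$ and recognizing $x^\top(A_j+B_jF)^\top P (A_j+B_jF)x = C_P(f_j)$ and $x^\top(F^\top R F + Q)x = C(x,Fx)$, inequality \eqref{term_ineq} evaluated at this quadratic form reads $\sum_j q_l(j) C_P(f_j) - C_P(x) + C(x,Fx) < 0$ for every vertex $q_l \in \upolv$. Since $\risk(C_P(f(x,Fx)))$ is the maximum of the linear functional $q \mapsto \sum_j q(j)C_P(f_j)$ over the polytope, and such a maximum is attained at a vertex, this yields the key one-step drop $\risk(C_P(f(x,Fx))) \leq C_P(x) - C(x,Fx)$ for all $x$.

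Next I would establish the two quadratic bounds on $V$. The lower bound $V(x) \geq \lambda_{\min}(Q)\|x\|^2$ is immediate: all stage costs are nonnegative, so by monotonicity and translation invariance $J^*(x) \geq C(x,\pi^{MPC}(x)) \geq \|x\|_Q^2$. For the upper bound I would first treat $x \in \emax(W)\cap\mathbb{X}$: by Lemma \ref{ctrl_inv} the terminal law $u=Fx$ is feasible for all $N$ steps, and feeding this suboptimal policy into $J^*$ and telescoping the terminal inequality stage by stage gives $J^*(x) \leq C_P(x) = x^\top P x \leq \lambda_{\max}(P)\|x\|^2$. Since $\emax(W)\cap\mathbb{X}$ is a neighborhood of the origin contained in the compact feasibility set $\mathcal{X}_N$, and $J^*$ is continuous with $J^*(0)=0$, a routine compactness argument upgrades this to $V(x) \leq b_2\|x\|^2$ on all of $\mathcal{X}_N$ for a suitable $b_2$.

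The decrease condition is the crux and the step I expect to be the main obstacle. Let $x_{k|k} = x_k \in \mathcal{X}_N$, let $x_{k+1} = f(x_k, w_k)$ be the realized successor under \eqref{MPC_law_2}, and recall from Theorem \ref{feas_MPC} that $x_{k+1} \in \mathcal{X}_N$ surely. At stage $k+1$ I would build the candidate policy that reuses the optimal stage-$k$ policies $\pi^*_{k+1|k},\dots,\pi^*_{k+N-1|k}$ and appends the terminal law $Fx$ at the final step; this is feasible by Lemma \ref{ctrl_inv} and Theorem \ref{feas_MPC}. Decomposing $J^*(x_k) = C(x_k,\pi^{MPC}(x_k)) + \risk(\Phi_{k+1})$ via \eqref{eq:tcrisk}, where $\Phi_{k+1}$ is the $\fil_{k+1}$-measurable optimal tail, the candidate cost $\hat{J}_{k+1}$ differs from $\Phi_{k+1}$ only in the innermost argument, where $\risk(C_P(x_{k+N}))$ is replaced by $C(x_{k+N},Fx_{k+N}) + \risk(C_P(x_{k+N+1}))$. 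The terminal inequality bounds this replacement from above by $C_P(x_{k+N})$, and then I would propagate this pointwise bound outward through the nested composition using monotonicity and translation invariance of each one-step risk measure to conclude $J^*(x_{k+1}) \leq \hat{J}_{k+1} \leq \Phi_{k+1}$ as random variables. Applying the monotone $\risk$ and translation invariance finally gives $\risk(V(f(x_k,w))) \leq \risk(\Phi_{k+1}) = J^*(x_k) - C(x_k,\pi^{MPC}(x_k)) \leq V(x_k) - \lambda_{\min}(Q)\|x_k\|^2$, i.e.\ \eqref{eqn_RSES} with $b_3 = \lambda_{\min}(Q)$. The delicate part is carrying out the nested-composition propagation rigorously, ensuring each intermediate quantity is appropriately measurable and that monotonicity applies at every layer, rather than any single inequality. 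With both bounds and the decrease established on $\mathcal{X}_N$, the local form of Lemma \ref{lyap_stab} delivers ULRSES with domain $\mathcal{X}_N$.
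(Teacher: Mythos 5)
Your proposal is correct and follows essentially the same route as the paper's proof: you take the optimal MPC cost $J^*$ as the risk-sensitive Lyapunov function, extract the key one-step drop $\risk(C_P(f(x,Fx)))\le C_P(x)-C(x,Fx)$ from \eqref{term_ineq} via the vertex/dual representation (this is exactly the paper's inequality $Z_{k+N}+\risk_{k+N}(Z_{k+N+1})\le 0$), obtain the decrease condition by shifting the optimal tail policy and appending the terminal law $Fx$ (feasible by Lemma~\ref{ctrl_inv} and Theorem~\ref{feas_MPC}), and close with the local form of Lemma~\ref{lyap_stab}.

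The only place you diverge is the quadratic upper bound, and your treatment is partly sharper and partly weaker than the paper's. Sharper: on $\mathbb{X}\cap\emax(W)$ you telescope the terminal inequality through the nested composition to get $J^*(x)\le C_P(x)\le\lambda_{\max}(P)\|x\|^2$, whereas the paper feeds the same $F$-policy into the cost but bounds it by operator norms, obtaining the cruder constant $\beta=(N\theta_f+\|P\|_2)M_A$; your telescoping is the cleaner argument and yields a tighter constant. Weaker: to extend the bound from $\mathbb{X}\cap\emax(W)$ to all of $\mathcal{X}_N$ you invoke continuity of $J^*$, which is neither established in the paper nor automatic for the value function of a constrained stochastic program (it would require a maximum-theorem-type argument on the feasibility correspondence). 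Fortunately, continuity is not needed. The paper instead shows that $J^*$ is uniformly bounded by some $\Gamma$ on $\mathcal{X}_N$ --- using that $\mathcal{X}_N$ is compact (closed because $\mathcal{W}$ is finite and the dynamics are continuous, bounded because $\mathcal{X}_N\subseteq\mathbb{X}$) and that feasible trajectories and controls are bounded --- and then patches the two bounds: choosing a ball $\mathcal{E}_d\subset\emax(W)$ of radius $d>0$, one uses the local quadratic bound inside $\mathcal{E}_d$ and $J^*(x)\le\Gamma\le(\Gamma/d^2)\|x\|^2$ outside it, which gives a valid $b_2$ on all of $\mathcal{X}_N$. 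Replacing your continuity appeal with this boundedness-plus-patching step (which is presumably what your ``routine compactness argument'' gestures at) makes your proof complete; everything else, including the propagation of the terminal replacement bound outward through the nested risk composition by monotonicity and translation invariance, matches the paper's argument.
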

\iftoggle{EV}{\begin{proof} See Appendix~\ref{app:stab_MPC}. \end{proof}}{}

\section{Solution Algorithms}\label{sec:alg}

Prior to solving problem $\mathcal{MPC}$, one would first need to find a matrix $P$ that satisfies \eqref{term_ineq} -- a bilinear semi-definite inequality in $(P,F)$. While checking feasibility of a bilinear semi-definite inequality is NP-hard \cite{TokerOzbay1995}, one can transform this inequality into an LMI by applying the Projection Lemma \cite{SkeltonIwasakiEtAl1997}. The next two results present LMI characterizations of conditions \eqref{control_LMI_1_stoch}--\eqref{state_term_cond_stoch}. \iftoggle{EV}{The proofs are provided in Appendix~\ref{app:soln_proj_lem}.}{}

\begin{theorem}[LMI Characterization of Stability Constraint]\label{thm_stab_1}
Let $\overline{A}:=\begin{bmatrix} {A}_1^\top &\ldots&A_{L}^\top \end{bmatrix}^\top$, $\overline{B}:=\begin{bmatrix} B^\top _{1}&\ldots&B_{L}^\top 
\end{bmatrix}^\top$, and for each $q_l\in\upolv$, define $\Sigma_{l}:=\mathrm{diag}(q_l(1),\ldots,q_l(L))\succ 0$. Consider the following set of LMIs with decision variables $Y$, $G$, $\overline Q=\overline Q^\top \succ 0$:
\begin{equation} \label{ineq_stab_1}
\begin{bmatrix}
I_{L\times L}\otimes\overline{Q}&0&0&-\Sigma^{\frac{1}{2}}_{l}(\overline{A}G+\overline{B}Y)\\
\ast&R^{-1}&0&-Y\\
\ast&\ast&I&-Q^{\frac{1}{2}}G\\
\ast&\ast&\ast&-\overline Q+G+G^\top \\
\end{bmatrix}\succ 0,\,
\end{equation}
for all $l\in\{1,\ldots,\mathrm{card}(\upolv)\}$. The expression in \eqref{term_ineq} is equivalent to the set of LMIs in \eqref{ineq_stab_1} by setting $F=YG^{-1}$ and $P=\overline Q^{-1}$.
\end{theorem}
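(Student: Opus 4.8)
The plan is to reduce the matrix inequality \eqref{term_ineq} to the LMI \eqref{ineq_stab_1} in three stages: a compact rewriting, a sequence of Schur complements, and a slack-variable linearization that removes the bilinear coupling between $P$ and $F$. First I would exploit the block-diagonal structure of $\Sigma_l \otimes P$ (whose $j$-th diagonal block is $q_l(j) P$) to write
\[
\sum_{j=1}^L q_l(j)\,(A_j + B_j F)^\top P (A_j + B_j F) = (\overline{A} + \overline{B} F)^\top (\Sigma_l \otimes P)(\overline{A} + \overline{B} F),
\]
so that \eqref{term_ineq} becomes $P - (\overline{A} + \overline{B} F)^\top (\Sigma_l \otimes P)(\overline{A} + \overline{B} F) - F^\top R F - Q \succ 0$. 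Factoring the three ``positive'' quadratic terms and applying the Schur complement to each in turn converts this into the (still bilinear) block inequality
\[
\begin{bmatrix}
\Sigma_l \otimes P & 0 & 0 & (\Sigma_l\otimes P)(\overline{A} + \overline{B} F)\\
0 & I & 0 & R^{1/2}F\\
0 & 0 & I & Q^{1/2}\\
\ast & \ast & \ast & P
\end{bmatrix}\succ 0.
\]

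The decisive stage is to linearize this condition via the change of variables $\overline Q = P^{-1}$, $Y = FG$ and a slack variable $G$, as enabled by the Projection Lemma \cite{SkeltonIwasakiEtAl1997}. I would apply the congruence transformation $\mathrm{diag}(\Sigma_l^{-1/2}\otimes P^{-1},\, R^{-1/2},\, I,\, G)$ to the block matrix above. The leading block maps $\Sigma_l \otimes P \mapsto I_L \otimes \overline Q$, the $(2,2)$ block maps $I \mapsto R^{-1}$, the trailing block maps $P \mapsto G^\top \overline Q^{-1} G$, and the off-diagonal blocks become $(\Sigma_l^{1/2}\otimes I)(\overline{A} G + \overline{B} Y)$, $Y$, and $Q^{1/2}G$ respectively (the sign pattern of \eqref{ineq_stab_1} is recovered by replacing $G$ with $-G$ in the trailing congruence factor). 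This reproduces \eqref{ineq_stab_1} except that the trailing block reads $G^\top \overline Q^{-1} G$ rather than $-\overline Q + G + G^\top$. The two are linked by completing the square: from $(G - \overline Q)^\top \overline Q^{-1}(G-\overline Q) \succeq 0$ one obtains $G^\top \overline Q^{-1} G \succeq -\overline Q + G + G^\top$.

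With this bound in hand both directions follow. For sufficiency, if \eqref{ineq_stab_1} holds then the trailing block $-\overline Q + G + G^\top \succ 0$ forces $G + G^\top \succ \overline Q \succ 0$, hence $G$ is invertible and $F = YG^{-1}$, $P = \overline Q^{-1}$ are well defined; enlarging the trailing block from $-\overline Q + G + G^\top$ to the larger $G^\top\overline Q^{-1}G$ preserves positive definiteness, and reversing the (invertible) congruence and Schur complements recovers \eqref{term_ineq}. For necessity, given $(P,F)$ satisfying \eqref{term_ineq} I would choose $G = \overline Q = P^{-1}$ and $Y = FG$, for which the bound is tight, $-\overline Q + G + G^\top = \overline Q = G^\top \overline Q^{-1} G$, so the linear trailing block coincides with the exact Schur form and \eqref{ineq_stab_1} holds. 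I expect the linearization stage to be the main obstacle: the Schur-complement reductions are routine, but verifying that the congruence transformation produces exactly the blocks of \eqref{ineq_stab_1} (including the $\Sigma_l^{1/2}\otimes I$ factors and the sign conventions) requires care, and establishing that \emph{no} conservatism is introduced hinges on the tightness of the completing-the-square bound at the specific choice $G = \overline Q$.
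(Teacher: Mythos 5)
Your proof is correct, but it takes a genuinely different route from the paper. The paper establishes the equivalence by invoking the Projection Lemma: it writes \eqref{term_ineq} in the projected form $U^{\perp}\Omega (U^{\perp})^\top \prec 0$ with $U^\top = \bigl[-(\Sigma_l^{1/2}(\overline A + \overline B F))^\top, \; -F^\top, \; -Q^{1/2}, \; I\bigr]$, pairs it with the trivially satisfied second projection $(V^\top)^{\perp}\Omega ((V^\top)^{\perp})^\top \prec 0$, and lets the lemma certify both existence of the slack variable $G$ (necessity) and sufficiency in one stroke. You instead never actually use the Projection Lemma (your citation of it is decorative --- nothing in your argument depends on it): you apply an explicit congruence $\mathrm{diag}(\Sigma_l^{-1/2}\otimes P^{-1},\,R^{-1/2},\,I,\,-G)$ to the bilinear Schur form and then decouple $P$ from $F$ via the completion-of-squares bound $G^\top \overline Q^{-1} G \succeq -\overline Q + G + G^\top$, handling the two directions separately: sufficiency because enlarging the trailing diagonal block preserves definiteness (with invertibility of $G$ extracted from $G+G^\top \succ \overline Q \succ 0$, exactly as the paper also argues), and necessity by the explicit, $l$-independent choice $G = \overline Q$, $Y = F\overline Q$, at which the bound is tight. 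This is the standard slack-variable linearization argument, and it is arguably more elementary and self-contained than the paper's: it produces a concrete certificate $G$ rather than an existential one, and it makes transparent exactly where non-conservatism comes from (tightness at $G=\overline Q$). What the paper's heavier machinery buys is reuse: the same Projection Lemma template is applied again in Corollary \ref{coro_inv}, where the requirement that $G$ be \emph{shared} across constraints degrades the equivalence to mere sufficiency --- a structural point that is cleanly visible in the lemma-based formulation. Both proofs correctly yield the full equivalence claimed in the theorem under the stated assumption $\Sigma_l \succ 0$ (which your leading block $\Sigma_l \otimes P$ needs just as the paper's $\Sigma_l^{1/2}$ factors do).
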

Furthermore, by the application of the Projection Lemma to the expressions in (\ref{control_LMI_1_stoch}), (\ref{state_cons_LMI_1_stoch}) and (\ref{state_term_cond_stoch}), we obtain the following corollary:
\begin{corollary}\label{coro_inv}
Suppose the following set of LMIs with decision variables $Y$, $G$, and $W=W^\top \succ 0$ are satisfied:
\begin{equation} \label{ineq_cons_1}
\begin{split}
\begin{bmatrix}
x_{\max}^2I&-T_x({A}_jG+{B}_jY)\\
\ast&-W+G+G^\top \\
\end{bmatrix}\succ 0, \\
\begin{bmatrix} 
u_{\max}^2I&-T_uY \\
\ast&-W+G+G^\top \\
\end{bmatrix}\succ 0,\\ 
\begin{bmatrix} 
W&-({A}_jG+{B}_jY)\\
\ast&-W+G+G^\top \\
\end{bmatrix}\succ 0.
\end{split}
\end{equation}
Then, by setting $F=YG^{-1}$, the LMIs above represent \emph{sufficient} conditions for the LMIs in (\ref{control_LMI_1_stoch}), (\ref{state_cons_LMI_1_stoch}) and (\ref{state_term_cond_stoch}).
\end{corollary}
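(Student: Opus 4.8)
The plan is to establish each of the three original matrix inequalities \eqref{control_LMI_1_stoch}, \eqref{state_cons_LMI_1_stoch}, and \eqref{state_term_cond_stoch} independently, applying to each the same three-step recipe to its counterpart block LMI in \eqref{ineq_cons_1}: (i) a Schur complement reduction with respect to the positive-definite $(1,1)$-block, (ii) an elementary completion-of-squares bound linking the slack variable $G$ to $W^{-1}$, and (iii) a congruence transformation by $G^{-1}$. Since the corollary asserts only \emph{sufficiency}, I only need the implication from \eqref{ineq_cons_1} to the original conditions, so the full Projection/Elimination Lemma (needed for an equivalence, as in Theorem \ref{thm_stab_1}) can be avoided in favor of this simpler one-directional argument.

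First I would record a structural observation common to all three LMIs in \eqref{ineq_cons_1}: each contains the lower-right block $-W + G + G^\top$, and positive definiteness of the block matrix forces $-W + G + G^\top \succ 0$, i.e. $G + G^\top \succ W \succ 0$. This guarantees that $G$ is \emph{invertible}, since $Gx = 0$ with $x \neq 0$ would give $x^\top(G+G^\top)x = 0$, contradicting $G + G^\top \succ 0$. Invertibility of $G$ is precisely what makes the change of variables $F = YG^{-1}$ (equivalently $Y = FG$) well-defined and reversible, and it is the bridge between the two sets of decision variables.

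The key algebraic fact I would invoke is the inequality
\[
G^\top W^{-1} G \succeq G + G^\top - W,
\]
which follows from expanding $(G - W)^\top W^{-1}(G - W) \succeq 0$ together with $W = W^\top$. I would then treat the control constraint as the representative case: taking the Schur complement of the second LMI in \eqref{ineq_cons_1} with respect to its $(1,1)$-block $u_{\max}^2 I$ yields $G + G^\top - W \succ u_{\max}^{-2}\, Y^\top T_u^\top T_u Y$; substituting $Y = FG$ and chaining with the bound above gives $G^\top W^{-1} G \succ u_{\max}^{-2}\, G^\top F^\top T_u^\top T_u F G$; pre- and post-multiplying by $G^{-\top}$ and $G^{-1}$ then produces $W^{-1} \succ u_{\max}^{-2}\, F^\top T_u^\top T_u F$, which is exactly \eqref{control_LMI_1_stoch} (with strict inequality, hence sufficient). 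The state-constraint LMI and the invariance LMI are dispatched identically, using the identity $A_j G + B_j Y = (A_j + B_j F)G$ and taking Schur complements with respect to $x_{\max}^2 I$ and $W$, respectively, to recover \eqref{state_cons_LMI_1_stoch} and \eqref{state_term_cond_stoch}.

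I do not anticipate a genuine obstacle, as this is a standard linearizing change of variables for robust LMIs. The only points requiring care are confirming invertibility of $G$ before introducing $G^{-1}$, and correctly tracking the congruence transformation so that the strict inequalities obtained are recognized as implying the original non-strict conditions. The conceptual crux is identifying the completion-of-squares bound as the single ingredient that converts the slack-variable LMI into the quadratic matrix condition, which is what renders the full Projection Lemma unnecessary for the sufficiency direction claimed here.
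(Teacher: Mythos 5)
Your proof is correct, but it takes a genuinely different route from the paper. The paper proves the corollary by reusing the Projection Lemma machinery from Theorem~\ref{thm_stab_1}: it rewrites, e.g., \eqref{state_term_cond_stoch} in strict Schur-complement form, invokes the Projection Lemma to show this is \emph{equivalent} to the existence of some matrix $G$ satisfying the corresponding block LMI in \eqref{ineq_cons_1} (after the substitution $F = YG^{-1}$), and then observes that the equivalence degrades to mere sufficiency only because $G$ is forced to be the \emph{same} variable appearing in \eqref{ineq_stab_1}. You instead give a direct one-directional derivation: extract $-W + G + G^\top \succ 0$ from the $(2,2)$ block (hence $G$ invertible), take the Schur complement with respect to the $(1,1)$ block, substitute $Y = FG$ so that $A_jG + B_jY = (A_j + B_jF)G$, chain with the completion-of-squares bound $G^\top W^{-1}G \succeq G + G^\top - W$, and perform a congruence by $G^{-1}$ to recover each original condition in strict (hence non-strict) form. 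Both arguments are sound, and your Schur-complement computations and the handling of strict versus non-strict inequalities check out. What each buys: your argument is elementary and self-contained, needing no elimination lemma at all since only sufficiency is claimed; the paper's argument, by establishing equivalence for a \emph{free} slack variable $G$, makes explicit exactly where conservatism enters (the coupling of $G$ across constraints and with the stability LMI), an insight your one-directional derivation does not expose.
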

Note that in Corollary~\ref{coro_inv}, strict inequalities are imposed only for the sake of analytical simplicity when applying the Projection Lemma. Using similar arguments as in \cite{KothareBalakrishnanEtAl1996}, non-strict versions of the above LMIs may also be derived, for example, leveraging some additional technicalities \cite{Scherer1995}.

A solution approach for the receding horizon adaptation of problem $\mathcal{OPT}$ is to first solve the LMIs in Theorem \ref{thm_stab_1} and Corollary \ref{coro_inv}. If a solution for $(P,Y,G,W)$ is found, problem $\mathcal{MPC}$ can be solved via dynamic programming (see \cite[Theorem 2]{Ruszczynski2010}) after state and action \emph{discretization}, see, e.g., \cite{ChowPavone2013b,ChowTsitsiklis1991}. Note that the discretization process might yield a large-scale dynamic programming problem for which the computational complexity scales exponentially with the resolution of \revision{the} discretization. This motivates the convex programing approach presented next.

\subsection{Convex Programming Approach}

While problem $\mathcal{MPC}$ is defined as an optimization over \emph{Markov} control policies, in the convex programming approach, we re-define the problem as an optimization over \emph{history-dependent} policies. One can show (with a virtually identical proof) that the stability Theorem \ref{stoch_stab_MPC} still holds when history-dependent policies are considered. Furthermore, since Markov policies are optimal in our setup, the value of the optimal cost stays the same. The key advantage of history-dependent policies is that their additional flexibility leads to a convex formulation of the online problem. 
Consider the following parameterization of \emph{history-dependent} control policies. Let  $j_0,\ldots,j_{h}\in\{1,\ldots,L\}$ be the realized indices for the disturbances in the first $h+1$ steps of the $\mathcal{MPC}$ problem, where $h\in \{1, \ldots, N-1\}$. The control to be exerted at stage $h$ is denoted by ${U}_h(j_0,\ldots,j_{h-1})$. Similarly, we refer to the state at stage $h$ as ${X}_h(j_0,\ldots,j_{h-1})$. The dependence on $(j_0,\ldots,j_{h-1})$ enables us to keep track of the growth of the scenario tree. In terms of this new notation, the system dynamics \eqref{eqn_sys} can be rewritten as: \vspace{-0.5cm}

{
\begin{align}
{X}_1(j_0) = &A_{j_0}{X}_{0}+ B_{j_0}{U}_{0},\,\, &h=1, \nonumber\\
{X}_h(j_0,\ldots,j_{h-1}) = &A_{j_{h-1}}{X}_{h-1}(j_0,\ldots,j_{h-2})+B_{j_{h-1}}{U}_{h-1}(j_0,\ldots,j_{h-2}),\,\, &h\geq 2, \label{stoch_trans_ALG}
\end{align}
}%
where $X_0 := x_{k|k}$, and constraints~\eqref{mpc_sc_con} and~\eqref{mpc_term_con} \revision{can be rewritten} as:\vspace{-0.5cm}

{
\begin{align}
 {U}_0 \in \mathbb{U}, U_h(j_0,\ldots,j_{h-1}) &\in \mathbb{U},\ h\in \{1,\ldots,N-1\} \label{mpc_con} \\
 X_h(j_0,\ldots,j_{h-1}) &\in \mathbb{X},\ h\in \{1,\ldots,N\} \label{mpc_st} \\
 X_N(j_0,\ldots,j_{N-1}) &\in \mathcal{E}_{\max}, \label{mpc_term}
\end{align}
}%
for all $j_0,\ldots,j_{N-1} \in \{1,\ldots,L\}$. The final solution algorithm, termed convex MPC ($\mathcal{CMPC}$), is presented below. 
\begin{quote}{\bf Algorithm $\mathcal{CMPC}$} --- Given an initial state $x_{0} \in \mathbb{X}$ and a prediction horizon $N\geq 1$, solve
\[ \textbf{Offline} \] \vspace{-8mm}
{
\begin{align*}
	\max_{W=W^\top \succ 0,G, Y, \overline{Q}=\overline{Q}^\top \succ 0} \quad &\text{logdet}(W) \\
	\text{s.t.} \quad & \text{LMIs \eqref{ineq_stab_1} and (\ref{ineq_cons_1})}. 
\end{align*}
}
Denote optimizers: $\{W^\ast, \overline{Q}^\ast\}$.
\[ \textbf{Online} \]
\begin{enumerate}
\item Let $(W,P)=(W^\ast, (\overline{Q}^\ast)^{-1})$. At each step $k\in\{0,1,\ldots\}$, solve:
{
\begin{align*}
\hspace{-3mm} \min_{\footnotesize \substack{ U_0, {U}_{h}(\cdot), \\ h\in\{1,\ldots,N\} } } &\risk_{k,k+N}( C(x_{k|k},{U}_{0}),\ldots,C({X}_{N-1},{U}_{N-1}), C_P(X_N)) \\
\text{s.t.} \quad & \text{eqs. \eqref{mpc_con}--~\eqref{mpc_term}}.
\end{align*}
}
\item Set $\pi^{MPC}(x_{k|k}) = \overline{U}_{0}$.
\end{enumerate}
\end{quote}

\revision{The cost function in the online problem can be expressed as a nested sequence of convex quadratic inequalities by iteratively applying an epigraph reformulation (\iftoggle{EV}{see Appendix~\ref{app:mpc_cost_epi}}{see~\cite{SinghChowEtAl2018}} for an illustrative example). This results in a convex quadratically-constrained quadratic program (QCQP) which may be solved very efficiently even for moderate (clarified in the next section) values of $N$. In particular, the epigraph reformulation introduces an extra $O(L^{N-1})$ variables and $O(ML^{N-1})$ quadratic inequalities (where $M = \mathrm{card}(\upolv)$) to the existing $O(N_u L^{N-1})$ control variables in the online MPC problem.}

\iftoggle{EV}{
As a degenerate case, when we exclude all lookahead steps, problem $\mathcal{MPC}$ is reduced to an \emph{offline} optimization. By trading off performance, one can compute the control policy offline and implement it directly online without further optimization:

\begin{quote}{\bf Algorithm $\mathcal{MPC}^0$} --- Given $x_{0} \in \mathbb{X}$, solve:
\begin{align*}
\min_{\footnotesize \begin{array}{c}
\gamma_2,W=W^\top \succ 0, G, Y, \overline Q=\overline Q^\top \succ 0\\
\end{array}} \quad & \gamma_2 \\
\text{s.t.} \quad & \text{LMIs} \eqref{ineq_stab_1}, (\ref{ineq_cons_1}) \\
\quad & \begin{bmatrix}
1&x_0^\top \\
 \ast&W
\end{bmatrix}\succeq 0\ , \quad 
\begin{bmatrix}
 {\gamma}_{2}I&x_{0}^\top \\
 \ast&\overline Q
\end{bmatrix}\succeq 0.
\end{align*}
Then, set $\pi^{MPC}(x_k) = YG^{-1}x_k$.
\end{quote}
The domain of feasibility for $\mathcal{MPC}^0$ is the control invariant set $\mathbb{X}\cap \emax(W)$. Showing ULRSES for algorithm $\mathcal{MPC}^0$ is more straightforward than the corresponding analysis for problem $\mathcal{MPC}$ and is summarized within the following corollary. 

\begin{corollary}[Quadratic Lyapunov Function]\label{quad_lyap_coro} 
Suppose problem $\mathcal{MPC}^0$ is feasible. Then, system \eqref{eqn_sys} under the offline MPC policy: $\pi^{MPC}(x_k) = YG^{-1}x_k$ is ULRSES with domain $\mathbb{X}\cap \emax(W)$.
\end{corollary}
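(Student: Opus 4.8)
The plan is to exhibit the quadratic function $V(x) = x^\top P x$, with $P = (\overline{Q}^\ast)^{-1}$, as a risk-sensitive Lyapunov function and verify the hypotheses of the (local version of) Lemma~\ref{lyap_stab} on the domain $\mathbb{X} \cap \emax(W)$. First I would unpack what feasibility of $\mathcal{MPC}^0$ delivers. Since the LMIs~\eqref{ineq_stab_1} hold, Theorem~\ref{thm_stab_1} gives the stability inequality~\eqref{term_ineq} with $F = YG^{-1}$ and $P = \overline{Q}^{-1}$. Since the LMIs~\eqref{ineq_cons_1} hold, Corollary~\ref{coro_inv} implies conditions~\eqref{control_LMI_1_stoch},~\eqref{state_cons_LMI_1_stoch},~\eqref{state_term_cond_stoch}, whence Lemma~\ref{ctrl_inv} guarantees that $\mathbb{X} \cap \emax(W)$ is robust control invariant under $u(x) = Fx$ with the state and control constraints met almost surely. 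Finally, the auxiliary LMI $\left[\begin{smallmatrix} 1 & x_0^\top \\ \ast & W \end{smallmatrix}\right] \succeq 0$ is, by the Schur complement (using $W \succ 0$), equivalent to $x_0^\top W^{-1} x_0 \le 1$, i.e., $x_0 \in \emax(W)$; together with the hypothesis $x_0 \in \mathbb{X}$ this places the initial condition inside the domain.

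The sandwich bounds $b_1 \|x\|^2 \le V(x) \le b_2 \|x\|^2$ follow immediately from $P \succ 0$ with $b_1 = \lambda_{\min}(P)$ and $b_2 = \lambda_{\max}(P)$. The crux is the decrease condition in~\eqref{eqn_RSES}. Writing the closed-loop map as $f(x,w) = (A(w)+B(w)F)x$ and using the dual representation~\eqref{markov_crm} of the (time-invariant) Markov polytopic risk measure, I would compute
\[
\risk\big(V(f(x,w))\big) = \max_{q\in\upol}\,\sum_{j=1}^L q(j)\, x^\top (A_j+B_jF)^\top P (A_j+B_jF)\, x.
\]
Because the maximand is \emph{affine} in $q$, the maximum over the polytope $\upol$ is attained at a vertex, so this equals $\max_{q_l\in\upolv} x^\top\!\big[\sum_{j=1}^L q_l(j)(A_j+B_jF)^\top P (A_j+B_jF)\big] x$. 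This is exactly the quantity controlled by~\eqref{term_ineq}: for every vertex $q_l \in \upolv$ that inequality yields $\sum_{j} q_l(j)(A_j+B_jF)^\top P (A_j+B_jF) \prec P - (Q + F^\top R F)$. Taking the maximum over vertices gives
\[
\risk\big(V(f(x,w))\big) - V(x) \;\le\; -\,x^\top (Q + F^\top R F)\, x \;\le\; -\lambda_{\min}(Q)\,\|x\|^2,
\]
so setting $b_3 = \lambda_{\min}(Q) > 0$ (using $Q \succ 0$ and $F^\top R F \succeq 0$) establishes~\eqref{eqn_RSES}, in fact for all $x \in \reals^{N_x}$.

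With all three hypotheses of Lemma~\ref{lyap_stab} verified, the conclusion follows from its local version applied with $\Omega = \mathbb{X} \cap \emax(W)$: the decrease condition holds on $\Omega$ (indeed globally), while robust control invariance of $\mathbb{X} \cap \emax(W)$ from Lemma~\ref{ctrl_inv} ensures the closed-loop trajectory remains in $\Omega$ and respects the constraints, yielding ULRSES with domain $\mathbb{X} \cap \emax(W)$. I expect the only nontrivial step to be the reduction of the risk evaluation to a vertex maximization, i.e., recognizing that affinity in $q$ lets $\max_{q\in\upol}$ collapse onto $\upolv$, which is precisely what makes the vertex-indexed inequality~\eqref{term_ineq} the correct Lyapunov certificate; the remaining steps are direct invocations of the already-established Theorem~\ref{thm_stab_1}, Corollary~\ref{coro_inv}, and Lemmas~\ref{lyap_stab} and~\ref{ctrl_inv}.
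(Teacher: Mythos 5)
Your proposal is correct and follows essentially the same route as the paper's proof: invoke Theorem~\ref{thm_stab_1} to recover inequality~\eqref{term_ineq} with $F = YG^{-1}$ and $P = \overline{Q}^{-1}$, use the dual (vertex) representation of the Markov polytopic risk measure to turn~\eqref{term_ineq} into the decrease condition for $V(x) = x^\top P x$, and conclude via the local version of Lemma~\ref{lyap_stab} on the robust control invariant set $\mathbb{X}\cap\emax(W)$. The only cosmetic difference is that the paper takes $b_3 = \lambda_{\min}(Q + F^\top R F)$ while you take the slightly looser but equally valid $b_3 = \lambda_{\min}(Q)$, and you spell out the vertex-maximization and invariance steps that the paper leaves implicit.
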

\begin{proof}
From Theorem \ref{thm_stab_1}, we know that the set of LMIs in \eqref{ineq_stab_1} is equivalent to the expression in \eqref{term_ineq} when $F = YG^{-1}$. Then since $x_0 \in \mathbb{X}\cap\emax(W)$, a robust control invariant set under the local feedback control law $u(x) = YG^{-1}x$, exploiting the dual representation of Markov polytopic risk measures yields the inequality
\begin{equation}
\risk_k(x_{k+1}^\top P x_{k+1})-x_k^\top Px_k\leq -x_k^\top Lx_k \ \  \forall k\in\naturals, \label{lyap_quad_example}
\end{equation}
where $L = Q + \left(YG^{-1}\right)^TR\left(YG^{-1}\right) = L^\top \succ 0$. Define the Lyapunov function $V(x)=x^\top Px$. Set $b_1=\lambda_{\min}(P)>0$, $b_2= \lambda_{\max}(P)>0$ and $b_3=\lambda_{\min}(L)>0$. Then by Lemma \ref{lyap_stab}, this stochastic system is ULRSES with domain $\mathbb{X}\cap \emax(W)$.
\end{proof} 
}{}

Note that our algorithms require a vertex representation of the polytopic risk envelopes (rather then the hyperplane representation in eq.~\eqref{eq:rep_fin}). In our implementation, we use the vertex enumeration function included in the MPT toolbox \cite{HercegKvasnicaEtAl2013}, which relies on the simplex method.

\section{Numerical Experiments}
\label{sec:example}

\revision{In this section} we present several numerical experiments that were run on a 2.6 GHz Intel Core i7 laptop, using the MATLAB YALMIP Toolbox (version 3.0 \cite{Loefberg2004}) with the Mosek solver~\cite{ApS2017}.

\subsection{Effects of Risk Aversion} \label{sec:ex_1}

In the first example, we consider the system studied in~\cite{BernardiniBemporad2012}, a similarly motivated work with an identical dynamical model but restricted to a risk neutral formulation. We remove the conditional dependence in the Markov chain governing $w_k$, i.e., all rows of the transition matrix governing the $w_k$ Markov chain are set to be the same to be consistent with model~\eqref{eqn_sys}. As per Remark~\ref{rem:TV_p}, the extension to the general case is straightforward. The goal of the first experiment is to study the effects of using a risk-sensitive objective.

\revisionII{Specifically,} consider \revisionII{the} second-order system defined by the transition matrices:
\[
	A_j = \begin{bmatrix} -0.8 & 1 \\ 0 & \bar{w}_j \end{bmatrix} , B_j = \begin{bmatrix} 0 & 1 \end{bmatrix}^T, \quad j \in \{1,2,3\},
\]
where $\bar{w}_j \in \{0.8, 1.2, -0.4\}$ with pmf $p = [0.5, 0.3, 0.2]$. The state and control constraints are defined by $T_x = \mathrm{diag}(1/10,1/2)$, $x_{\max} = 1$, $T_u = 1$, $u_{\max} = 1$. The cost matrices are $Q = \mathrm{diag}(1,5)$ and $R = 1$. We choose the conditional Markov polytopic risk measure: $\mathrm{CVaR}_{\alpha}$. Note that $\alpha=1$ corresponds to the standard risk neutral objective while $\alpha \ll 1$ corresponds to a worst-case risk assessment. For each value of $\alpha$ within the set $\{0.001, 0.5, 1.0\}$, we ran $1000$ simulations starting at $x_0 = (6,1)^T$ (a point lying in $\mathbb{X}\setminus \emax$), with 15 online MPC iteration steps and lookahead horizon $N=4$. Each MPC iteration took on average 24.82ms (for the case $\alpha = 0.5$ corresponding with the largest vertex set $\upolv$). In Figure~\ref{fig:ex_1_cost}, we plot the empirical cumulative density functions (cdfs) for the \emph{cumulative}\footnote{Recall that by translational invariance, $\risk_{0,k-1}(C_0,\ldots,C_k) = \risk_0\circ \cdots \circ \risk_{k-1}(C_0 +\cdots+C_k)$.} cost distribution at various time indexes.
\begin{figure}[h]
\centering
	\begin{subfigure}[b]{0.45\textwidth}
		\centering
		\includegraphics[width=\textwidth,clip]{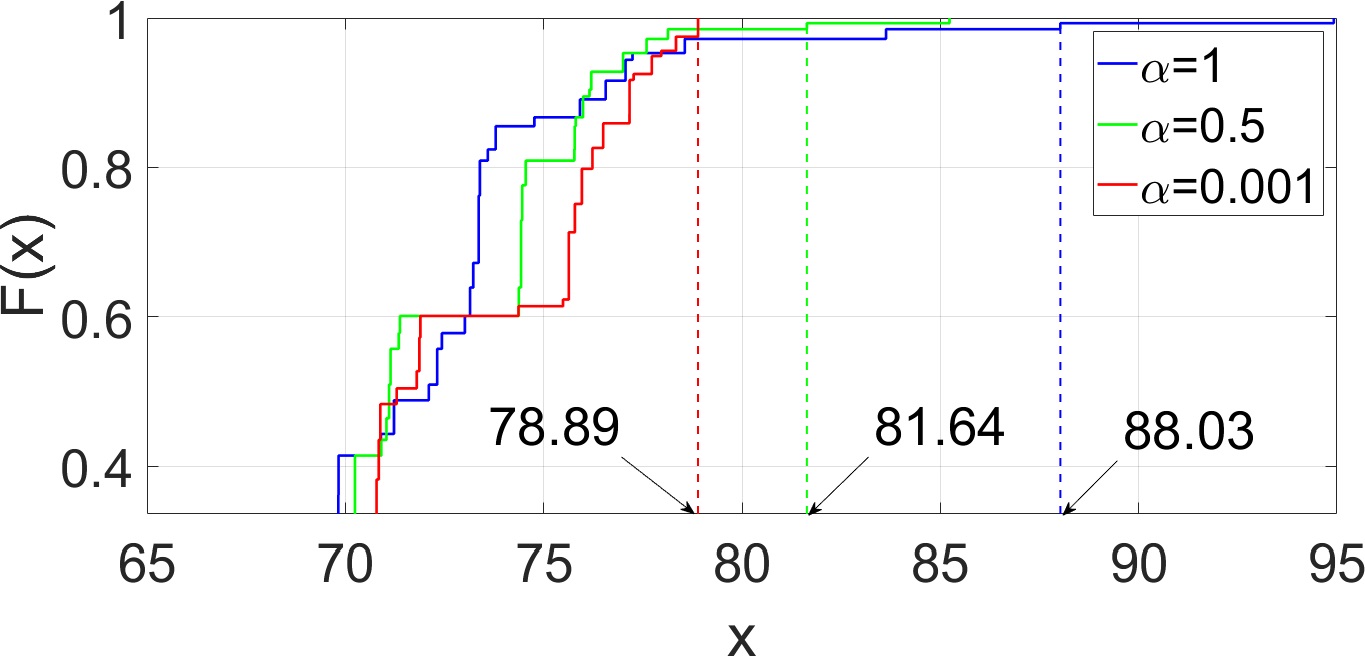}
		\caption{$k=3$}
		\label{fig:ex_1_k3}
	\end{subfigure}
	\begin{subfigure}[b]{0.45\textwidth}
		\centering
		\includegraphics[width=\textwidth,clip]{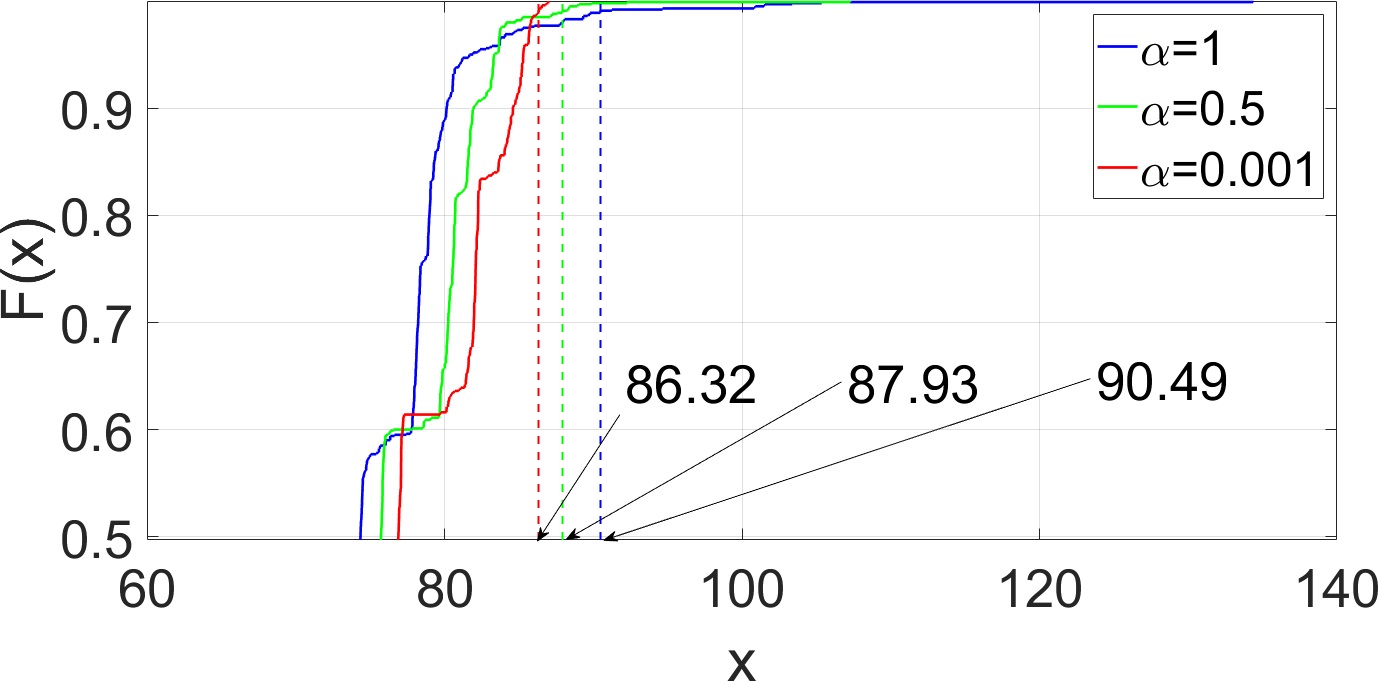}
		\caption{$k=7$}
		\label{fig:ex_1_k7}
	\end{subfigure}
	\begin{subfigure}[b]{0.45\textwidth}
		\centering
		\includegraphics[width=\textwidth,clip]{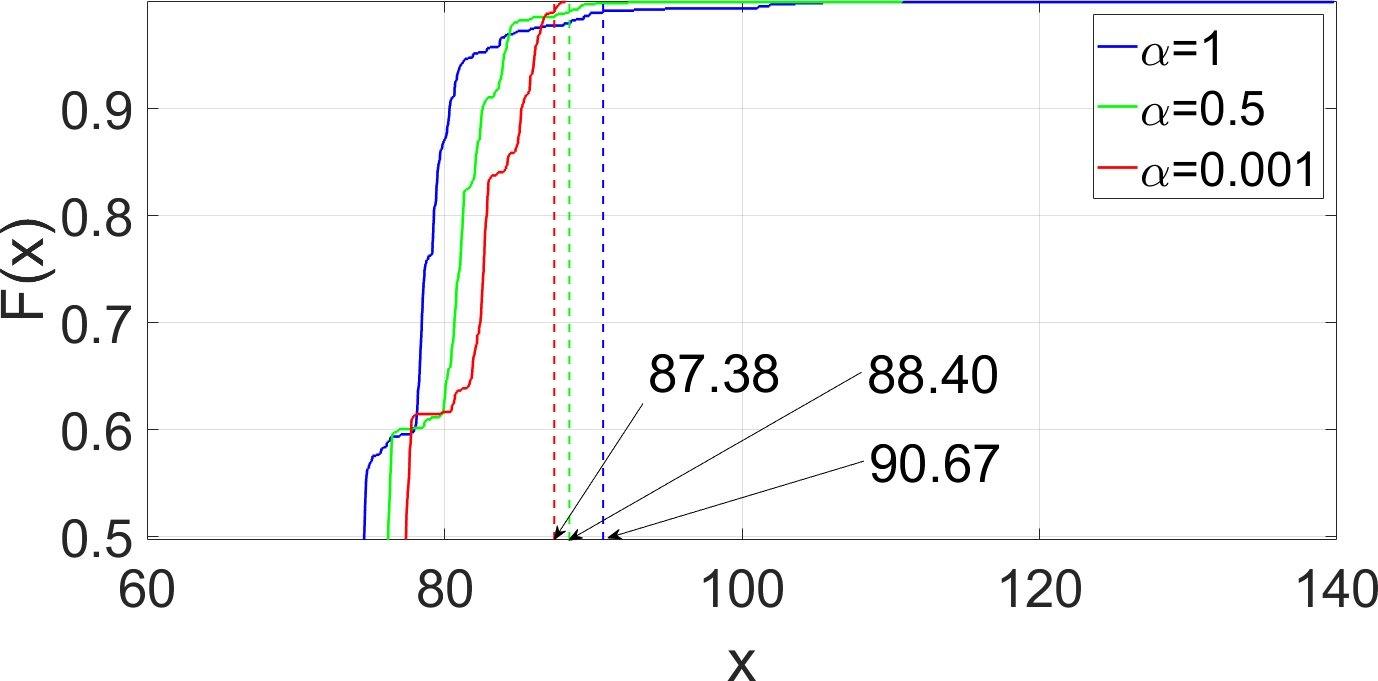}
		\caption{$k=11$}
		\label{fig:ex_1_k11}
	\end{subfigure}
	\begin{subfigure}[b]{0.45\textwidth}
		\centering
		\includegraphics[width=\textwidth,clip]{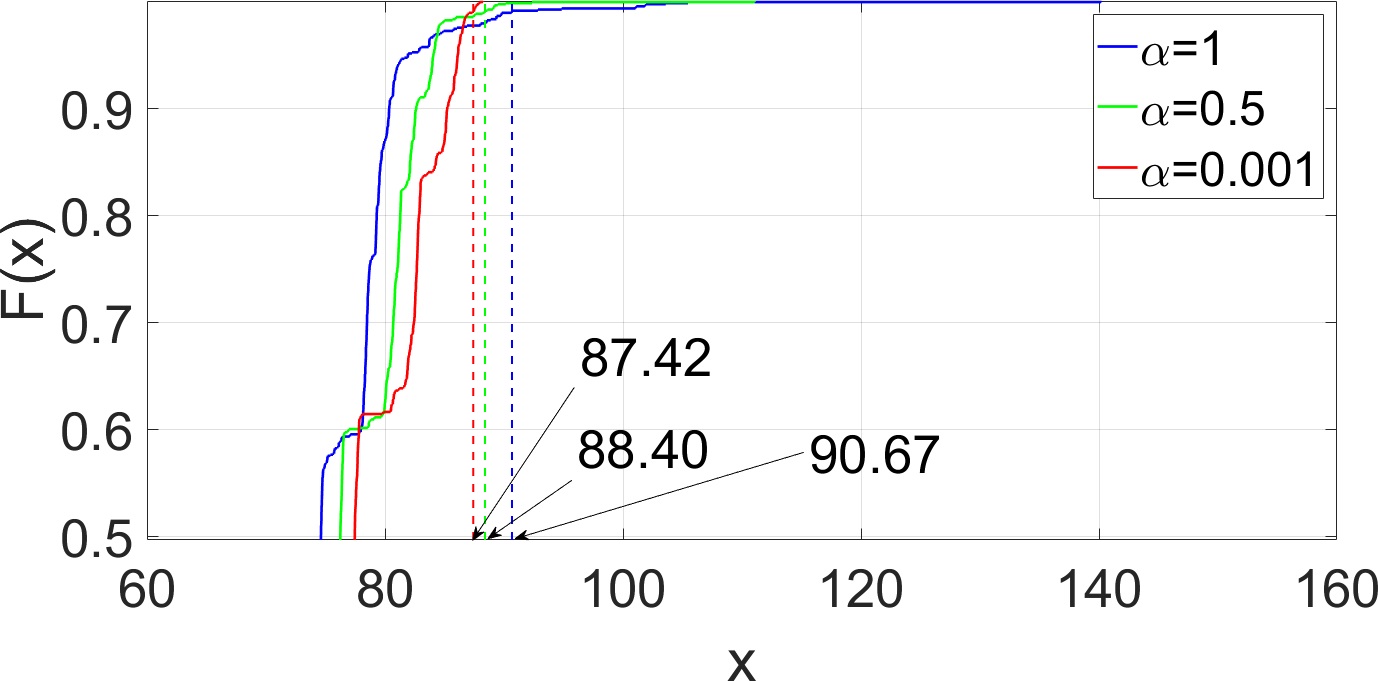}
		\caption{$k=14$}
		\label{fig:ex_1_k14}
	\end{subfigure}
	\caption{Cumulative cost cdfs for $k \in \{3, 7, 11, 14\}$. Dashed lines indicate the 0.99 quantile. The views presented are zoomed-in near the tail to emphasize the effects of risk-aversion. MPC horizon $N = 4$.}
\label{fig:ex_1_cost}
\end{figure}

Notice that as $\alpha$ decreases, the optimization further targets the high-cost tail of the cost distribution, at the expense of higher mean cost\footnote{One could additionally consider the convex combination $(1-\beta) \mathbb{E}[\cdot] + \beta\, \mathrm{CVaR}_{\alpha}(\cdot)$ for $\beta\in [0,1]$ to characterize the risk-sensitive Pareto trade-off curve.}. This is clearly observed in Figure~\ref{fig:ex_1_cost} that shows the tail quantile value decreasing as $\alpha$ decreases. Thus, using a single algorithm ($\mathcal{CMPC}$), we are able to generate tunable risk-sensitive policies from risk-neutral to worst-case.

\subsection{Computational Limits of $\mathcal{CMPC}$} \label{sec:ex_2}
In this example, we randomly generate a set of $L=6$ systems using MATLAB's $\mathrm{drss}$ function with $N_x = 5$ and $N_u = 2$ to investigate the \emph{computational} limits of our algorithms. The constraints are defined by $(T_x, x_{\max}) = (2 I_5, 5)$, $(T_u, u_{\max}) = (I_2,1)$, and the cost is defined by the weighting matrices $Q = 2I_5$ and $R = I_2$. The nominal pmf $p$ is randomly generated and experiments were performed with conditional risk measure $\mathrm{CVaR}_{\alpha}$ with $\alpha = 0.2$ (yielding a vertex set of size 20), varying lookahead horizons, and 15 MPC iterations for each simulation. The results are summarized  in Table~\ref{tab:comp}.

\begin{table}[h]
\centering
       \begin{tabular}{c|c|c}
         \hline
     $N$ & \# Scenarios & Mean (max) [s] \\
     \hline
      2 & 7 & 0.02 (0.035) \\
      3 & 43 & 0.15 (0.211)\\
      4 & 259 & 1.84 (2.612) \\
      5 & 1555 & 54.73 (60.89)\\
          \hline
  \end{tabular}
   \caption{Solve times per MPC iteration for varying lookahead horizons $N$ for a system with $N_x=5$, $N_u = 2$, $L=6$, over 100 simulations. The \# Scenarios column corresponds to the number of control nodes in each online MPC scenario tree.} \label{tab:comp}
\end{table}

The table illustrates the applicability of the algorithm on a fairly large (with respect to the number of problem variables and constraints) dimensional example, with appreciable lookahead. The exponential growth in computation time is an unavoidable feature of scenario-based optimization. Thus, as typical of jump dynamic systems, we envision the applicability of this work to model temporally-extended (i.e., mode-switching) dynamics as opposed to fast dynamical systems. Improving the runtime capabilities of this algorithm must undoubtedly rely on massively parallel sampling in concert with branch-and-bound techniques, and is left for future research. 

\section{Conclusion and Future Work}\label{sec:conclusion}

In this paper we presented a framework for risk-sensitive MPC by leveraging recent advances in the theory of dynamic risk measures developed by the operations research community. The proposed approach has the following advantages: (1) it is axiomatically justified and leads to time-consistent risk assessments; (2) it is amenable to dynamic and convex programming; and (3) it is general, in that it captures a full range of risk assessments from risk-neutral to worst case (due to the generality of Markov polytopic risk measures). Our framework thus provides a unifying perspective on risk-sensitive MPC.

This paper opens several directions for future research. First, we plan to extend our work to handle cases where the state and control constraints are required to hold only with a given probability threshold (in contrast to hard constraints) by exploiting techniques such as \emph{probabilistic invariance}~\cite{CannonKouvaritakisEtAl2009}. This relaxation has the potential to provide significantly improved performance at the risk of occasionally violating constraints. Second, we plan to combine our approach with methods for scenario tree optimization in order to reduce the online computation load. Third, while polytopic risk measures encompass a wide range of possible risk assessments, extending our work to non-polytopic risk measures and more general stage-wise costs can broaden the domain of application of the approach. Fourth, we plan to generalize this framework to allow for nonlinear dynamics and more expressive models of uncertainty (e.g., time-varying distributions). Fifth, an important consideration from a practical standpoint is the \emph{choice} of risk measure appropriate for a given application. We plan to develop principled approaches for making this choice, e.g., by computing polytopic risk envelopes based on confidence regions for the disturbance model.



\bibliographystyle{IEEEtran} 
\newcommand{\noopsort}[1]{} \newcommand{\printfirst}[2]{#1}
  \newcommand{\singleletter}[1]{#1} \newcommand{\switchargs}[2]{#2#1}


\appendices

\section{Proof of Lemma \ref{ctrl_inv} and Theorem \ref{feas_MPC}}\label{app:mpc_aux}

 \begin{proof}[Proof of Lemma~\ref{ctrl_inv}]
We first prove the first and second statements and thereby establish $u(x)$ as a feasible control law within the set $\emax(W)$. Notice that:
\begin{equation}
	\|T_{u}Fx\|_2\leq u_{\max} \Leftrightarrow \|T_{u}FW^{\frac{1}{2}}(W^{-\frac{1}{2}} x)\|_2\leq u_{\max}.
\label{eq:feas_lem}
\end{equation}
From \eqref{term_set}, applying the Schur complement, we know that $\|W^{-\frac{1}{2}}x\|_2\leq 1$ for any $x\in\mathcal{E}_{\max}(W)$. Thus, by the Cauchy Schwarz inequality, a sufficient condition for~\eqref{eq:feas_lem} is given by $\|T_{u}FW^{\frac{1}{2}}\|_2\leq u_{\max}$, which can be written as
\[
(FW^{\frac{1}{2}})^\top T_{u}^\top T_{u}(FW^{\frac{1}{2}})\preceq u_{\max}^2 I \Leftrightarrow F^\top T_{u}^\top T_{u}F\preceq u_{\max}^2W^{-1}.
\]
Re-arranging the inequality above yields the expression given in \eqref{control_LMI_1_stoch}. The state constraint can be proved in an identical fashion by leveraging \eqref{term_set} and \eqref{state_cons_LMI_1_stoch}. It is omitted for brevity.

We now prove the third statement. By definition of a robust control invariant set, we are required to show that for any $x\in\mathcal{E}_{\max}(W)$, that is, for all $x$ satisfying the inequality: $x^\top W^{-1}x\leq 1$, application of the control law $u(x)$ yields the following inequality:
\[
(A_{j}x + B_{j}Fx)^\top W^{-1}(A_{j}x+ B_{j}Fx)\leq 1,\forall j\in\{1,\ldots,L\}.
\]
Equivalently, by the S-procedure \cite{Yakubovich1977}, we are required to show the existence of a $\lambda\geq 0$ such that the following condition holds:
\[\small
\begin{bmatrix}
\lambda W^{-1}-(A_{j} + B_{j}F)^\top W^{-1}(A_{j} + B_{j}F)&0\\
\ast& 1-\lambda
\end{bmatrix}\succeq 0, 
\]
for all $j\in\{1,\ldots,L\}$. By setting $\lambda=1$, one obtains the largest feasibility set for $W$ and $F$. The expression in \eqref{state_term_cond_stoch} corresponds to the (1,1) block in the matrix above. 
 \end{proof}
 
\begin{proof}[Proof of Theorem~\ref{feas_MPC}]
Given $x_{k|k}\in\mathcal{X}_{N}$, problem $\mathcal{MPC}$ may be solved to yield a closed-loop optimal control policy:
\[
\{\pi^*_{k|k}(x_{k|k}),\ldots,\pi^*_{k+N-1|k}(x_{k+N-1|k})\}, 
\]
such that $x_{k+N|k} \in \mathbb{X} \cap \mathcal{E}_{\max}(W)$ almost surely. Consider problem $\mathcal{MPC}$ at stage $k+1$ with initial condition $x_{k+1|k+1}$. From Lemma \ref{ctrl_inv}, we know that 
\begin{equation}\label{policy_k1}
\{\pi^*_{k+1|k}(x_{k+1|k}),\ldots,\pi^*_{k+N-1|k}(x_{k+N-1|k}),Fx_{k+N|k}\}, 
\end{equation}
is a feasible control policy at stage $k+1$. Note that this is simply a concatenation of the optimal tail policy from the previous iteration $\{\pi^{*}_{k+h|k}(x_{k+h|k})\}_{h=1}^{N-1}$, with the state feedback law $Fx_{k+N|k}$ for the final step. 

Since a feasible control policy exists at stage $k+1$, $x_{k+1|k+1} = A_{j}x_{k|k}+B_{j}\pi^*_{k|k}(x_{k|k}) \in \mathcal{X}_N$ for any $j\in\{1,\ldots,L\}$, completing the proof. 
\end{proof}

\section{Closed-loop Stability of MPC} \label{app:stab_MPC}
\begin{proof}[Proof of Theorem~\ref{stoch_stab_MPC}]
Let $J^*_k(x_{k|k})$ denote the optimal value function for problem $\mathcal{MPC}$. We will show that $J^*_k$ is a risk-sensitive Lyapunov function (Lemma \ref{lyap_stab}). Specifically, we first show that $J^*_k$ satisfies the two inequalities in equation \eqref{eqn_RSES}. Consider the bottom inequality in equation \eqref{eqn_RSES}. At time $k$ consider problem $\mathcal{MPC}$ with state $x_{k|k}$. 
The sequence of optimal control policies is given by $\{\pi^*_{k+h|k} \}_{h=0}^{N-1}$. Now, consider the sequence:
\begin{equation}\label{policy_k1}
\pi_{k+h|k+1}(x_{k+h|k}) := \{\pi^*_{k+1|k}(x_{k+1|k}),\ldots,\pi^*_{k+N-1|k}(x_{k+N-1|k}),Fx_{k+N|k}\}, \nonumber 
\end{equation}
which, as we know from Lemma~\ref{ctrl_inv}, is a feasible solution to problem $\mathcal{MPC}$ at stage $k+1$. Thus, for problem $\mathcal{MPC}$ at stage $k+1$ with initial condition given by $x_{k+1|k+1}=A(w_k)x_{k|k}+B(w_k)\pi_{k|k}^\ast(x_{k|k})$, denote by $\overline{J}_{k+1}(x_{k+1|k+1})$, the problem $\mathcal{MPC}$ objective corresponding to the control policy sequence $\pi_{k+h|k+1}(x_{k+h|k})$. Note that  $x_{k+1|k+1}$ (and therefore $\overline{J}_{k+1}(x_{k+1|k+1})$) is a random variable with $L$ possible realizations, given $x_{k|k}$. Define:
\begin{equation*}
\begin{split}
Z_{k+N}:=\ &x_{k+N|k}^T\left(-P + Q + F^TRF\right) x_{k+N|k},\\
Z_{k+N+1}:= &\left( (A(w_{k+N|k}) + B(w_{k+N|k})F) x_{k+N|k} \right)^T  P \\
& \left( (A(w_{k+N|k}) + B(w_{k+N|k})F) x_{k+N|k} \right).
\end{split}
\end{equation*}
By exploiting the dual representation of Markov polytopic risk metrics, one can write
\begin{equation*}
\begin{split}
&Z_{k+N}+\risk_{k+N}(Z_{k+N+1}) = x_{k+N|k}^T\left(-P +Q + F^T R F  \right) x_{k+N|k}  \\
&+ \max_{q\in \upol(p)} \sum_{j=1}^L q(j) x_{k+N|k}^T \left(A_j + B_j F \right)^T  P \,  \left( A_j + B_j F \right) x_{k+N|k}.
\end{split}
\end{equation*}
Combining the equation above with equation \eqref{term_ineq}, one readily obtains the inequality 
\begin{equation}\label{stab_ineq_key}
Z_{k+N}+\risk_{k+N}(Z_{k+N+1})\leq 0.
\end{equation}
One can then construc the following chain of inequalities:
{\small 
\begin{align}
J^{*}_k(x_{k|k})&=C(x_{k|k},\pi^\ast_{k|k}(x_{k|k}))+ \risk_k\Biggl(\risk_{k+1, N}\Bigl(C(x_{k+1|k}, \pi^*_{k+1|k}(x_{k+1|k})), \ldots,  \|x_{k+N|k}\|^{2}_{Q}   +  \|x_{k+N|k}\|^{2}_{F^T R F} \ + \nonumber \\ 
& \hspace{7cm} \risk_{k+N}(Z_{k+N+1}) - Z_{k+N} \!- \!\risk_{k+N}({Z_{k+N+1}}) \!\Bigr) \!\!\Biggr)\! \! \nonumber \\
&\geq C(x_{k|k},\pi^\ast_{k|k}(x_{k|k}))+ \risk_k\Biggl(\risk_{k+1, N}\Bigl(C(x_{k+1|k}, \pi^*_{k+1|k}(x_{k+1|k})), \ldots, \|x_{k+N|k}\|^{2}_{Q}  \ +  \|x_{k+N|k}\|^{2}_{F^T R F} + \\
&\hspace{11cm} \risk_{k+N}(Z_{k+N+1}) \Bigr) \Biggr) \nonumber \\
&= C(x_{k|k},\pi^\ast_{k|k}(x_{k|k}))\!+\!\risk_k\Bigl({\overline{J}_{k+1}(x_{k+1|k+1})}\Bigr) \nonumber  \\
&\geq C(x_{k|k},\pi^\ast_{k|k}(x_{k|k}))+\risk_k\Bigl(J^*_{k+1}(x_{k+1|k+1})\Bigr), \label{eq:bottom}
\end{align}
}%
where the first equality follows from the definitions of $Z_{k+N}$ and of dynamic, time-consistent risk measures, the second inequality follows from equation \eqref{stab_ineq_key}  and the monotonicity property of Markov polytopic risk metrics (see also  \cite[Page 242]{Ruszczynski2010}), the third equality 
follows from the definition of $\overline{J}_{k+1}(x_{k+1|k+1})$, 
and the fourth inequality follows from the definition of $J^*_{k+1}$ and the monotonicity of Markov polytopic risk metrics. 

Consider now the top inequality in equation \eqref{eqn_RSES}. One can easily bound $J^{*}_k(x_{k|k})$ from below according to:
\begin{equation}\label{eq:low_con}
J^{*}_k(x_{k|k})\geq x_{k|k}^T Q x_{k|k}\geq \lambda_{\min}(Q)\|x_{k|k}\|^2,
\end{equation}
where $\lambda_{\min}(Q)>0$ by assumption. The upper bound for $J^{*}_k(x_{k|k})$ is derived in two steps. First, define
\[
\begin{split}
& M_A:=\max_{r\in\{0,\ldots,N-1\}}\max_{j_0,\ldots,j_{r}\in\{1,\ldots,L\}}\alpha_{j_{r}}\ldots \alpha_{j_{1}}\alpha_{j_0},\,\, \\ &\text{where }  \alpha_j:=\|A_j + B_jF\|_2.
\end{split}
\] 
Suppose $x_{k|k} \in \mathbb{X}\cap \emax(W)$. From Lemma \ref{ctrl_inv}, we know that the control policy $\pi_{k+h|k}(x_{k+h|k}) = \{Fx_{k+h|k}\}_{h=0}^{N-1}$ is feasible and consequently, $\mathbb{X}\cap \emax(W) \subseteq \mathcal{X}_N$. Defining $\theta_f:=\|Q +F^\top R F\|_2$, we thus have
\begin{equation*}
\begin{split}
J^{*}_k(x_{k|k})\leq &C\left(x_{k|k},Fx_{k|k}\right)+\risk_k\Bigl( C\left(x_{k+1|k},Fx_{k+1|k}\right)+ \ldots+\risk_{k+N-1}\left(x_{k+N|k}^\top P x_{k+N|k}\right)\ldots\Bigr)\\
\leq&\theta_f\|x_{k|k}\|_2^2 +\risk_k\Bigl( \theta_f\|x_{k+1|k}\|_2^2+\ldots+\risk_{k+N-1}\left( \|P\|_2\|x_{k+N}\|^2_2\right)\ldots\Bigr),
\end{split}
\end{equation*}
for all $x_{k|k} \in \mathbb{X}\cap \emax(W)$. Exploiting the translational invariance and monotonicity property of Markov polytopic risk metrics, one obtains the upper bound for all $x_{k|k} \in \mathbb{X}\cap \emax(W)$:
\begin{equation}
J^{*}_k(x_{k|k})\leq \underbrace{\left(N\, \theta_f+ \|P\|_2\right )M_A}_{:=\beta >0 }\|x_{k|k}\|_2^2. 
\end{equation}
In order to derive an upper bound for $J^{*}_k(x_{k|k})$ with the above structure for all $x_{k|k} \in \mathcal{X}_N$, we draw inspiration from a similar proof in \cite[Proposition 2.18]{RawlingsMayne2013}. By leveraging the finite cardinality of the disturbance set $\mathcal{W}$ and the set closure preservation property of the inverse of continuous functions, it is possible to show that $\mathcal{X}_N$ is closed. Then, since $\mathcal{X}_N$ is necessarily a subset of the bounded set $\mathbb{X}$, it follows that $\mathcal{X}_N$ is compact. Thus, there exists some constant $\Gamma>0$ such that $J^{*}_k(x_{k|k}) \leq \Gamma$ for all $x_{k|k}\in \mathcal{X}_N$. That $\Gamma$ is finite follows from the fact that $\{\|x_{k+h|k}\|_2\}_{h=0}^{N}$ and $\{\| \pi_{k+h|k}(x_{k+h|k}) \|_2 \}_{h=0}^{N-1}$ are finitely bounded for all $x_{k|k} \in \mathcal{X}_N$. Now since $\emax(W)$ is compact and non-empty, there exists a $d>0$ such that $\mathcal{E}_{d}:= \{x\in \reals^{N_x} \mid \|x\|_2 \leq d\} \subset \emax(W)$. Let $\hat{\beta} = \max\{\beta \|x\|_2^2 \mid \|x\|_2 \leq d \}$. 

Consider, now, the function: $(\Gamma/\hat{\beta}) \beta\|x\|_2^2$. Then since $\beta\|x\|^2_2 > \hat{\beta}$ for all $x \in \mathcal{X}_N \setminus \mathcal{E}_{d}$ and $\Gamma\geq\hat{\beta}$, it follows that
\begin{equation}\label{eq:up}
J^{*}_k(x_{k|k}) \leq \left(\dfrac{\Gamma \beta}{\hat{\beta}}\right)\|x_{k|k}\|_2^2,\,\, \forall x_{k|k} \in \mathcal{X}_N,
\end{equation}
as desired. Combining the results in equations \eqref{eq:bottom}, \eqref{eq:low_con}, \eqref{eq:up}, and given the time-invariance of our problem setup, one concludes that $J^{*}_k(x_{k|k})$ is a risk-sensitive Lyapunov function for the closed-loop system \eqref{eqn_sys}, in the sense of Lemma \ref{lyap_stab}. This concludes the proof.
\end{proof}

\section{Proof of Theorem \ref{thm_stab_1} and Corollary \ref{coro_inv}}\label{app:soln_proj_lem}
We first present the Projection Lemma:
\begin{lemma}[Projection Lemma]\label{proj_lem}
For matrices $\Omega(X)$, $U(X)$, $V(X)$ of appropriate dimensions, where $X$ is a matrix variable, the following statements are equivalent:
\begin{enumerate}
\item There exists a matrix $W$ such that
\[
\Omega(X)+U(X)WV(X)+V(X)^\top W^\top U(X)^\top \prec 0.
\]
\item The following inequalities hold:
\[
\begin{split}
&U(X)^{\perp}\Omega(X) (U(X)^{\perp}))^\top \prec 0,\,\,(V(X)^\top )^{\perp}\Omega(X) ((V(X)^\top )^{\perp})^\top \prec 0,
\end{split}
\]
\end{enumerate}
where $A^\perp$ is the orthogonal complement of $A$.
\end{lemma}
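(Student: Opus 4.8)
The plan is to prove the two implications separately. The implication $(1)\Rightarrow(2)$ is immediate and I would dispatch it first. Throughout I suppress the dependence on the parameter $X$, since it is held fixed. Assuming a feasible $W$ exists, I left-multiply the inequality $\Omega+UWV+V^\top W^\top U^\top\prec 0$ by $U^{\perp}$ and right-multiply by $(U^{\perp})^\top$. Because $U^{\perp}U=0$, the term $U^{\perp}(UWV)(U^{\perp})^\top$ and its transpose both vanish, leaving $U^{\perp}\Omega(U^{\perp})^\top\prec 0$; strictness survives because $U^{\perp}$ has full row rank, so congruence sends negative definite matrices to negative definite ones. The symmetric computation, multiplying instead by $(V^\top)^{\perp}$ and using $V((V^\top)^{\perp})^\top=((V^\top)^{\perp}V^\top)^\top=0$, yields $(V^\top)^{\perp}\Omega((V^\top)^{\perp})^\top\prec 0$.

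The substantive direction is $(2)\Rightarrow(1)$, which I would attack by a congruence reduction followed by a Schur-complement argument. First I observe that the whole statement is invariant under $\Omega\mapsto T^\top\Omega T$, $U\mapsto T^\top U$, $V\mapsto VT$ for nonsingular $T$ (with $W$ unchanged and, e.g., $U^{\perp}\mapsto U^{\perp}T^{-\top}$), so I may choose $T$ freely. Taking $T=[\,R_U\ \ N_U\,]$ with orthonormal columns $R_U$ spanning $\mathrm{range}(U)$ and $N_U$ spanning $\ker(U^\top)$, the transformed $U$ becomes $[\,\hat U^\top\ \ 0\,]^\top$ with $\hat U$ of full row rank $r=\mathrm{rank}(U)$. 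Since $\hat U$ is surjective, $\hat U W$ ranges over all matrices as $W$ varies, so I may replace it by a free variable $\bar W$ and assume $U=[\,I_r\ \ 0\,]^\top$. In these coordinates hypothesis (2) for $U$ reads exactly $\Omega_{22}\prec 0$, the trailing diagonal block.

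With $\Omega_{22}\prec 0$ in hand, I apply the Schur complement to the target matrix $M(\bar W)$ in $2\times 2$ block form: $M(\bar W)\prec 0$ is equivalent to negativity of the Schur complement of its $(2,2)$ block, which after expansion is quadratic in $\bar W$, of the form $\Phi(\bar W)=G+\bar W K+K^\top\bar W^\top+\bar W H\bar W^\top$ with $H=-V_2\Omega_{22}^{-1}V_2^\top\succeq 0$ and $G$ the Schur complement of $\Omega$ itself (here $V=[\,V_1\ \ V_2\,]$ is partitioned conformally). Completing the square gives $\min_{\bar W}\Phi(\bar W)=G-K^\top H^{-1}K$, read with a generalized inverse when $H$ is singular, so feasibility reduces to showing this minimized matrix is negative definite; the explicit minimizer $\bar W=-K^\top H^{-1}$ then reconstructs a feasible $W$. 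The main obstacle, and the place demanding the most care, is precisely to show that the remaining hypothesis $(V^\top)^{\perp}\Omega((V^\top)^{\perp})^\top\prec 0$ is exactly what forces $G-K^\top H^{-1}K\prec 0$, and to handle the rank-deficient case, where boundedness of $\Phi$ additionally requires $K^\top\xi=0$ on $\ker H$, a condition again supplied by that hypothesis. This step amounts to identifying $\ker(V)$ with the relevant Schur-complement kernel, and once this linear-algebra bookkeeping is complete the construction of $W$ is explicit and the proof closes.
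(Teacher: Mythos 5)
The paper itself does not prove this lemma---it simply cites Chapter 2 of \cite{SkeltonIwasakiEtAl1997}---so your attempt must stand on its own, and it does not yet. The direction $(1)\Rightarrow(2)$ is fine, as is the reduction of $(2)\Rightarrow(1)$ to canonical coordinates ($U=[\,I_r\ \ 0\,]^\top$, absorbing the surjective factor $\hat U$ into $\bar W$), the identification of the $U$-hypothesis with $\Omega_{22}\prec 0$, and the Schur-complement/completion-of-squares setup for $\Phi(\bar W)=G+\bar W K+K^\top\bar W^\top+\bar W H\bar W^\top$. But the proof stops exactly where the content of the lemma lies: you never show that the second hypothesis, $(V^\top)^\perp\Omega((V^\top)^\perp)^\top\prec 0$, forces the completed-square residual to be negative definite on the relevant subspace. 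Labeling this ``linear-algebra bookkeeping'' does not discharge it; it is the entire nontrivial implication, and as written the substantive direction remains unproven.

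Moreover, your sketch of how the rank-deficient case would close is based on a false principle, so the plan as stated would fail. You assert that feasibility ``additionally requires $K^\top\xi=0$ on $\ker H$'' (i.e., $\mathrm{range}(K)\subseteq\mathrm{range}(H)$), importing the scalar least-squares condition for boundedness of the infimum. But boundedness below is irrelevant here---the goal is to make $\Phi$ negative definite, and components of $K$ lying in $\ker H$ \emph{help} rather than obstruct: writing $P_N$ for the orthogonal projector onto $\ker H$ and choosing the corresponding block of $\bar W$ as $-t(P_NK)^\top$ contributes $-2t\,K^\top P_N K$, a Finsler-type term that only relaxes the requirement. The correct residual condition is that $G-K^\top H^{+}K$ be negative definite on $\ker(P_NK)$, not your range condition. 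A concrete counterexample to your claim: take $V_2=0$ and $V_1$ injective, so $H=-V_2\Omega_{22}^{-1}V_2^\top=0$ and $K=V_1\neq 0$; your ``required'' condition fails identically, yet the lemma's hypotheses can hold (they reduce to $\Omega_{22}\prec 0$) and the LMI is feasible via $\bar W=-tV_1^\top$ for large $t$. So both the missing identification of $\ker V$ with the Schur-complement kernel and the corrected treatment of singular $H$ must be supplied before this constitutes a proof.
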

\begin{proof}
See Chapter 2 in \cite{SkeltonIwasakiEtAl1997}.
\end{proof}

We now give the proof for Theorem \ref{thm_stab_1} by leveraging the Projection lemma:

\begin{proof}(Proof of Theorem \ref{thm_stab_1})
Using simple algebraic factorizations, for all $l\in\{1,\ldots,\text{cardinality}\left(\csd^{\text{poly},V}(p)\right)\}$, inequality (\ref{term_ineq}) can be be rewritten as
\[
\begin{bmatrix}
I\\
\Sigma^{\frac{1}{2}}_{l}(\overline{A}+\overline{B}F)\\
F\\
Q^{\frac{1}{2}}\\
\end{bmatrix}^\top \!\!\begin{bmatrix}
P&0&0&0\\
0&-I_{L\times L}\otimes{P}&0&0\\
0&0&- R&0\\
0&0&0&- I\\
\end{bmatrix}\!\!\begin{bmatrix}
I\\
\Sigma^{\frac{1}{2}}_{l}(\overline{A}+\overline{B}F)\\
F\\
Q^{\frac{1}{2}}\\
\end{bmatrix}\succ 0.
\]
By Schur complement, the above expression is equivalent to
\begin{equation}
\begin{bmatrix}
I&0&0&\Sigma_{l}^{\frac{1}{2}}(\overline{A}+\overline{B}F)\\
0&I&0&F\\
0&0&I&Q^{\frac{1}{2}}\\
\end{bmatrix}\begin{bmatrix}
I_{L\times L}\otimes\overline{Q}&0&0&0\\
0&R^{-1}&0&0\\
0&0&I&0\\
0&0&0&-\overline Q\\
\end{bmatrix}\begin{bmatrix}
I&0&0\\
0&I&0\\
0&0&I\\
(\overline{A}+\overline{B}F)^\top\Sigma^{\frac{1}{2}}_{l} &F^\top &Q^{\frac{1}{2}}\\
\end{bmatrix}\succ 0,
\label{proj_1}
\end{equation}
where $\overline Q=P^{-1}$. Now since $\overline Q=\overline Q^\top \succ 0$ and $R=R^\top \succ 0$, we also have the following identity:
\begin{equation}
\begin{bmatrix}
I&0&0&0\\
0&I&0&0\\
0&0&I&0\\
\end{bmatrix}\begin{bmatrix}
I_{L\times L}\otimes\overline{Q}&0&0&0\\
0&R^{-1}&0&0\\
0&0&I&0\\
0&0&0&-\overline Q\\
\end{bmatrix}\begin{bmatrix}
I&0&0\\
0&I&0\\
0&0&I\\
0&0&0\\
\end{bmatrix}\succ 0.
\label{proj_2}
\end{equation}
Next, notice that
\[
\begin{bmatrix}
-\Sigma^{\frac{1}{2}}_{l}(\overline{A}+\overline{B}F)\\
-F\\
-Q^{\frac{1}{2}}\\
I\\
\end{bmatrix}^\perp = 
\begin{bmatrix}
I&0&0&\Sigma_{l}^{\frac{1}{2}}(\overline{A}+\overline{B}F)\\
0&I&0&F\\
0&0&I&Q^{\frac{1}{2}}\\
\end{bmatrix},\quad \begin{bmatrix}
0\\
0\\
0\\
I\\
\end{bmatrix}^\perp = 
\begin{bmatrix}
I&0&0&0\\
0&I&0&0\\
0&0&I&0
\end{bmatrix}.
\]
Now, set:
\[
\Omega = -\begin{bmatrix}
I_{L\times L}\otimes\overline{Q}&0&0&0\\
0&R^{-1}&0&0\\
0&0&I&0\\
0&0&0&-\overline Q\\
\end{bmatrix},\ U = \begin{bmatrix}
-\Sigma^{\frac{1}{2}}_{l}(\overline{A}+\overline{B}F)\\
-F\\
-Q^{\frac{1}{2}}\\
I\\
\end{bmatrix},\ V^T = \begin{bmatrix}
0\\
0\\
0\\
I\\
\end{bmatrix}.
\]
Then by Lemma \ref{proj_lem}, inequalities~\eqref{proj_1} and~\eqref{proj_2} are equivalent to the existence of a matrix $G$ that satisfies the following inequality for all $l\in\{1,\ldots,\text{cardinality}\left(\csd^{\text{poly},V}(p)\right)\}$:
\begin{equation}\label{proj_lem_use_stab}
\begin{bmatrix}
I_{L\times L}\otimes\overline{Q}&0&0&0\\
0&R^{-1}&0&0\\
0&0&I&0\\
0&0&0&-\overline Q\\
\end{bmatrix}+\begin{bmatrix}
-\Sigma^{\frac{1}{2}}_{l}(\overline{A}+\overline{B}F)\\
-F\\
-Q^{\frac{1}{2}}\\
I\\
\end{bmatrix}G\begin{bmatrix}
0\\
0\\
0\\
I\\
\end{bmatrix}^\top \\
+\begin{bmatrix}
0\\
0\\
0\\
I\\
\end{bmatrix}G^\top \begin{bmatrix}
-\Sigma^{\frac{1}{2}}_{l}(\overline{A}+\overline{B}F)\\
-F\\
-Q^{\frac{1}{2}}\\
I\\
\end{bmatrix}^\top \succ 0.
\end{equation}
Setting $F=YG^{-1}$ and pre-and post-multiplying the above inequality by $\text{diag}(I,R^{\frac{1}{2}},I, I)$ yields the LMI given in \eqref{ineq_stab_1}. Furthermore, from the inequality $-\overline Q+G+G^\top \succ 0$ where $\overline Q\succ 0$, we know that $G+G^\top \succ 0$. Thus, by the Lyapunov stability theorem, the linear time-invariant system $\dot{x} = -Gx$ with Lyapunov function $x^\top x$ is asymptotically stable (i.e., all eigenvalues of $G$ have positive real part). Therefore, $G$ is an invertible matrix and $F=YG^{-1}$ is well defined.
\end{proof}

\begin{proof}(Proof of Corollary \ref{coro_inv})
We will prove that the third inequality in (\ref{ineq_cons_1}) implies inequality (\ref{state_term_cond_stoch}). Details of the proofs on the implications of the first two inequalities in (\ref{ineq_cons_1}) follow from identical arguments and will be omitted fin the interest of brevity.
Using simple algebraic factorizations, inequality (\ref{state_term_cond_stoch}) may be rewritten (in strict form) as:
\[
\begin{bmatrix}
I\\
A_j+B_jF\\
\end{bmatrix}^\top \!\!\begin{bmatrix}
W^{-1}&0\\
0&-W^{-1}\\
\end{bmatrix}\!\!\begin{bmatrix}
I\\
A_j+B_jF\\
\end{bmatrix}\succ 0,\,\,\forall j\in\{1,\ldots,L\}.
\]
By Schur complement, the above expression is equivalent to
\begin{equation}
\begin{bmatrix}
I&A_j+B_jF\\
\end{bmatrix}\begin{bmatrix}
W&0\\
0&-W\\
\end{bmatrix}\begin{bmatrix}
I\\
(A_j+B_jF)^\top\\
\end{bmatrix}\succ 0,\,\,\forall j\in\{1,\ldots,L\}.
\label{proj_3}
\end{equation}
Furthermore since $W\succ 0$, we also have the identity
\begin{equation}
\begin{bmatrix}
I&0
\end{bmatrix}\begin{bmatrix}
W&0\\
0&-W\\
\end{bmatrix}\begin{bmatrix}
I\\
0\\
\end{bmatrix}\succ 0.
\label{proj_4}
\end{equation}
Now, notice that:
\[
\begin{bmatrix}
-({A}_j+{B}_jF)\\
I\\
\end{bmatrix}^\perp=
\begin{bmatrix}
I&{A}_j+{B}_jF\\
\end{bmatrix},\qquad \begin{bmatrix}
0\\
I\\
\end{bmatrix}^\perp=\begin{bmatrix}
I&0
\end{bmatrix}.
\]
Then by Lemma \ref{proj_lem}, inequalities~\eqref{proj_3} and~\eqref{proj_4} are equivalent to the existence of a matrix $G$ such that the following inequality holds  for all $j\in\{1,\ldots,L\}$:
\begin{equation}\label{proj_lem_use_inv}
\begin{bmatrix}
W&0\\
0&-W\\
\end{bmatrix}+\begin{bmatrix}
-({A}_j+{B}_jF)\\
I\\
\end{bmatrix}G\begin{bmatrix}
0\\
I\\
\end{bmatrix}^\top \\
+\begin{bmatrix}
0\\
I\\
\end{bmatrix}G^\top \begin{bmatrix}
-({A}_j+{B}_jF)\\
I\\
\end{bmatrix}^\top \succ 0.
\end{equation}
Note that Lemma \ref{proj_lem} provides an equivalence (necessary and sufficient) condition between (\ref{proj_lem_use_inv}) and (\ref{state_term_cond_stoch}) if $G$ is allowed to be any arbitrary LMI variable. However, in order to restrict $G$ to be the same variable as in Theorem \ref{thm_stab_1}, the equivalence relation reduces to sufficiency only. Setting $F = YG^{-1}$ in the above expression gives the claim.
\end{proof}

\section{Epigraph Reformulation of MPC Cost Function}\label{app:mpc_cost_epi}
Suppose $N=2$. We wish to solve
\[
	\min_{u_0, \pi_{1}} C(x_0,u_0) + \rho(C(x_1,\pi_1(x_1)) + \rho(C_T(x_2)))
\]
where $C_T$ is the terminal state cost $x_2^T P x_2$. Using the polytopic dual representation of a coherent risk measure and the history-dependent parameterization, the MPC problem can be equivalently written as:
\begin{align}
\min_{U_0,U_1(\cdot),\tau_0,\tau_1} \quad & C(X_0,U_0) + \tau_0 \\
\text{subject to} \quad & \tau_0 \geq q_l^T \left( C(X_1(\cdot),U_1(\cdot)) + \tau_1 \right) \quad \forall q_l \in \upolv \\
\quad &\tau_1(j_0) \geq q_l ^T C_T(X_2(j_0, \cdot))  \quad \forall j_0 = 1,\ldots,L,\ q_l \in \upolv \\
\quad & \text{State, Control, and terminal set constraints}.
\end{align}
This epigraph reformulation is justified by the fact that the maximum in~\eqref{markov_crm} must occur at one of the vertices in $\upolv$.

\end{document}